\def\@rmrk#1#2{\refstepcounter
    {#1}\@ifnextchar[{\@yrmrk{#1}{#2}}{\@xrmrk{#1}{#2}}}
\makeatletter\@addtoreset{equation}{section}\makeatother
 \newfont{\bfit}{cmbxti10 scaled 2000}
 \newfont{\biggi}{cmr12 scaled 2000}
 \newcommand{\eps}{\varepsilon}
 \newcommand{\R}{\mathbb{R}}
 \newcommand{\N}{\mathbb{N}}
 \newcommand{\prob}{\mathbb{P}}
 \newcommand{\me}{\mathbb{E}}
 \renewcommand{\P}{\mathbb{P}}
 \newcommand{\tome}{\tilde{\omega}}
 \newcommand{\skric}{{\mathcal C}}
 \newcommand{\skrid}{{\mathcal D}}
 \newcommand{\skril}{{\mathcal L}}
 \newcommand{\skrim}{{\mathcal M}}
 \newcommand{\skrin}{{\mathcal N}}
 \newcommand{\skrix}{{\mathcal X}}
 \newcommand{\sfrac}[2]{\mbox{$\frac{#1}{#2}$}}
\def\1{{\mathchoice {1\mskip-4mu\mathrm l}      % Blackboard bold 1
{1\mskip-4mu\mathrm l}
{1\mskip-4.5mu\mathrm l} {1\mskip-5mu\mathrm l}}}
\newcommand{\eq}{\begin{equation}}
\newcommand{\en}{\end{equation}}
\newenvironment{Proof}
{\vskip0.1cm\noindent{\bf Proof. }{\hspace*{0.3cm}}}%
{\nopagebreak {\hspace*{\fill}\rule{2mm}{2mm}}\\ }
\renewcommand{\subsection}{\secdef \subsct\sbsect}
\newcommand{\subsct}[2][default]{\refstepcounter{subsection}
\vspace{0.15cm}
{\flushleft\bf \arabic{section}.\arabic{subsection}~\bf #1  }
\nopagebreak\nopagebreak}
\newcommand{\sbsect}[1]{\vspace{0.1cm}\noindent
{\bf #1}\vspace{0.1cm}}
\newtheorem{theorem}{Theorem}[section]
\newtheorem{lemma}[theorem]{Lemma}
\newtheoremstyle{thm}{1.5ex}{1.5ex}{\itshape\rmfamily}{}
{\bfseries\rmfamily}{}{2ex}{}
\newtheoremstyle{rem}{1.3ex}{1.3ex}{\rmfamily}{}
{\itshape\rmfamily}{}{1.5ex}{}
\theoremstyle{rem}
\def\thebibliography#1{\section*{References}
  \list%
  {\arabic{enumi}.}%      *** style of reference number ***
    {\settowidth\labelwidth{[#1]}\leftmargin\labelwidth
    \advance\leftmargin\labelsep
    \parsep0pt\itemsep0pt
    \usecounter{enumi}}
    \def\newblock{\hskip .11em plus .33em minus .07em}
    \sloppy                   % \clubpenalty4000\widowpenalty4000
    \sfcode`\.=1000\relax}
\begin{document}
%%%%%%%%%%%%%%%%%%%%%%%%%%%%%%%%%%%%%%%%%%%%%%%%%%%%%%%%%%%%%%%%%%%%%%%%%%%%%%%
\title [ Large deviation principle for the  empirical degree measure of fitness preferential
attachment random networks] { Large deviation principle for the
empirical degree measure of preferential attachment random graphs }

%\author[Kwabena Doku-Amponsah ]{}

\maketitle
\thispagestyle{empty}
\vspace{-0.5cm}

\centerline{\sc{By K. Doku-Amponsah, F.O. Mettle and E.N.N. Nortey}}
\renewcommand{\thefootnote}{}
\footnote{\textit{Mathematics Subject Classification :} 60F10,
05C80} \footnote{\textit{Keywords: } Large deviation principle,
relative entropy, random network, random  tree, random coloured
graph, typed graph, typed tree, asymptotic equipartition property.}
\renewcommand{\thefootnote}{1}
\renewcommand{\thefootnote}{}
%\footnote{\textit{Address:} Statistics Department, University of
%Ghana, Box LG 115, Legon,Ghana.\,
%\textit{E-mail:\,kdoku@ug.edu.gh}.}
%\renewcommand{\thefootnote}{1}
%\centerline{\textit{Lecturer in Statistics}}%
%\centerline{\textit{University of Ghana}}
%\centerline{\textit{E-mail:\,kdoku@ug.edu.gh}.}
%\centerline{\small Version: \jobname;  \version}\vspace{0.2cm}%

\vspace{0.5cm}

\centerline{\bf \small Abstract} \begin{quote}{\small    We consider
preferential  attachment  random graphs
 which  may be  obtained  as  follows: It  starts  with  a  single node. If  a  new  node
appears, it is linked by an edge to one or more existing node(s)
with a probability proportional to function of their degree. For a
class of  linear preferential attachment random  graphs we find a
 large  deviation  principle (LDP) for the empirical  degree measure. In  the course  of  the  prove  this  LDP we establish an  LDP
for the empirical degree and pair distribution see
Theorem~\ref{PA3a},  of the fitness preferential attachment model of
random graphs.}
\end{quote}
\vspace{0.5cm}

\section{Introduction}
Preferential attachment (P.A) random graph models have become
extremely popular in the last  two decades  since they were  first
studied by (Barabasi and Albert ,1999).  Example  (van der Hofstad
,2013), (Newman, 2003) and (Newman  et.  al, 2006) provide good
overviews.

The P.A  model of random graphs are  graphs in which nodes are added
sequentially and attach to exactly one randomly chosen existing node
and the chance a new node connects to an existing node is
proportional to its degree.

The model is typically generalized to allow for vertices to have $m
>1$ initial edges by collapsing $m$ vertices in the one initial edge
case into a single vertex (possibly causing loops). The most studied
feature of these objects is the distribution of the degrees of the
nodes; that is, the proportion of nodes that have degree k as the
graph grows large.  See, example (Collevecchio et. al, 2013),
(Krapivsky et. al, 2000), (Rudas et. al, 2007) for results on more
general attachment rules.

Few  large  deviation results for  P.A  model have so far been
found. In paper ( Choi et. al, 2011), P.A schemes where the
selection mechanism is possibly time-dependent are considered, and
an in infinite dimensional large deviation principle for the sample
path evolution of the empirical degree distribution is found by
Dupuis-Ellis type methods.

(Dereich and  Moerters, 2009) studied a dynamic model of random
networks, where new vertices are connected to old ones with a
probability proportional to a sub-linear function of their degree.
For this model of  random  networks, they  obtained a strong limit
law for the empirical degree distribution. Results on the temporal
evolution of the degrees of individual vertices via large and
moderate deviation principles  were also found.

(Bryc et. al, 2009)  found  the large  deviation principle and
related results  for a class of Markov chains associated to the
`leaves'in P.A model  of  random  graphs using both analytic and
Dupuis--Ellis-type path arguments.  Recently,(Doku-Amponsah et. al,
2014)  proved  a  large  deviation  upper  bound  for  fitness
preferential  attachment  random  network.

In this  paper,  we  find  a  large  deviation  principle   for the
empirical  degree distribution  of  preferential  attachment random
network  in  the linear  regime. See, Theorem~\ref{PA3}. In  the
course  of  the  proof  of Theorem~\ref{PA3},  we find  a  large
deviation  principle  for  the  empirical  degree  and  pair measure
of  the  fitness preferential attachment  random  networks,  see
Theorem~\ref{PA30}   and a  joint LDP for  the  empirical  degree
and  pair measure, and  the  sample  path empirical  degree
distribution of the fitness preferential attachment random networks,
see Theorem~\ref{PA3a}. The main technique in our proof  is
exponential change  of  measure, see example (Doku-Amponsah et. al,
2014)  and the  method  of mixtures, see (Biggins, 2004).

\section{Main Results}
\subsection{LDP for  the  preferential  attachment model of  random
graphs}

Let $f:\big\{0,1,2,...\big\}\to[0,\,\infty]$ be  a weight function.
We define a preferential attachment random graph as
follows:\\
 It  starts  with  single vertex serving as  root.
If a new vertex $n$ is introduced,  it connects to vertices
$v_n\in\{\,1,\ldots,n-1\,\}$ independently with probability
proportional to $ f(N(v_n)),$ where
  $N(m)$ is the in-degree of vertex $m.$

We  write $\skrin=\big\{0,1,2,...\big\}.$ In  this  paper, we shall
restrict ourself to functions of the form $$f(k)=\gamma
k+\beta,\,\mbox{ where $\gamma,\beta\in(0,\,\infty].$}$$

 We define empirical degree measure
measure $\skril$ on $\skrin$ by
$$\skril(k)=\frac{1}{n-1}\sum_{m=1}^{n-1}\delta_{N^{(m)}(j_m)}(k).$$

We denote by  $\skrim(\skrin)$ the space of probability measures on
$\skrin,$ equipped with the topology generated by total variation
metric
$\|\pi-\hat{\pi}\|:=\frac{1}{2}\sum_{k=0}^{\infty}\|\pi(k)-\hat{\pi}(k)\|.$

\begin{theorem}\label{PA3}
Suppose $X$ is  P.A random graph with  linear weight function
$f:\skrin\to[0,\infty],$  satisfying $\gamma\ge 1-\beta,$
$\log(1+\beta/\gamma)<\infty$
 and $$\sum_{k=0}^{\infty}\sfrac{1}{\gamma
k+\beta}=\infty.$$ Then, as $n\to\infty$, the empirical degree
measure $\skril,$ satisfies a large deviation principle in
$\skrim(\skrin)$ with good rate function
$$I(\ell)=H\Big(\ell\,\|\,\sfrac{(\gamma+\beta)}{f}\otimes\hat{\ell}\Big),$$
where $\sfrac{(\gamma+\beta)}{f}\otimes\hat{\ell}(k)
=\frac{(\gamma+\beta)}{f(k)}\hat{\ell}(k)$ and
$\hat{\ell}(k)=\1-\sum_{j=0}^{k}\ell(k).$
\end{theorem}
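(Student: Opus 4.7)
The plan is to derive Theorem~\ref{PA3} from Theorem~\ref{PA3a} by the contraction principle. First, I would realise the linear preferential attachment graph of Theorem~\ref{PA3} as a fitness preferential attachment model with trivial (deterministic) fitness distribution, so that its empirical degree measure $\skril$ appears as the first marginal of the joint empirical degree--pair measure whose LDP has just been established. Since the projection $(\ell,\pi)\mapsto\ell$ from $\skrim(\skrin)\times\skrim(\skrin\times\skrin)$ to $\skrim(\skrin)$ is continuous in the total variation topologies, the contraction principle immediately yields an LDP for $\skril$ with good rate function
$$I(\ell)=\inf_{\pi}\tilde{I}(\ell,\pi),$$
where $\tilde{I}$ is the rate function furnished by Theorem~\ref{PA3a} and the infimum is taken over pair measures $\pi$ compatible with the degree marginal $\ell$.

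The crux is then to solve this variational problem in closed form. Writing $\tilde{I}$ as a joint relative entropy and using that the compatibility constraints between $\pi$ and $\ell$ are linear, a Lagrange-multiplier calculation should identify the unique minimiser $\pi^{\ell}$ as the law enforcing detailed balance for the pure-birth chain with rates $f(k)=\gamma k+\beta$: mass $(\gamma+\beta)^{-1}f(k)\ell(k)$ flows out of each state $k$ at a birth, and this flow is distributed over the higher states in proportion to $\hat{\ell}$. Substituting $\pi^{\ell}$ back into $\tilde{I}$ and collapsing the pair sum by means of $\hat{\ell}(k)=\sum_{j>k}\ell(j)$ should reduce the joint entropy to the one-dimensional expression $H\!\bigl(\ell\,\|\,\tfrac{\gamma+\beta}{f}\otimes\hat{\ell}\bigr)$ appearing in the statement.

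The main obstacle will be this variational computation, in particular verifying that the candidate $\pi^{\ell}$ actually realises the infimum and that the resulting rate function is good (lower semi-continuous with compact sub-level sets on $\skrim(\skrin)$). The hypotheses $\gamma\ge 1-\beta$, $\log(1+\beta/\gamma)<\infty$ and $\sum_{k}1/(\gamma k+\beta)=\infty$ enter precisely at this stage: they keep the normalising constants of the Gibbs form of $\pi^{\ell}$ finite, they control the tail of $\ell$ against $\log f$ so that $H\!\bigl(\ell\,\|\,\tfrac{\gamma+\beta}{f}\otimes\hat{\ell}\bigr)$ makes sense, and they guarantee that $\tfrac{\gamma+\beta}{f}\otimes\hat{\ell}$ is a bona fide probability measure on $\skrin$. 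They are also the same conditions under which the exponential change of measure and the method of mixtures used to establish Theorem~\ref{PA3a} can be applied without restricting to bounded degree, so the reduction above is legitimate in the regime considered.
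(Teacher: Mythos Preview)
Your overall strategy --- specialise the fitness model of Theorem~\ref{PA3a} to a trivial colour space and apply the contraction principle --- is exactly what the paper does. But you have misread the structure of the empirical measure $M_X$, and this has led you to invent a variational problem that is not actually there.

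In the paper, $M_X$ is a probability measure on $\skrin\times\skrix^*$, where $\skrix^*=\skrix\times\skrix$ records the \emph{colour pair} $(X(v_m),X(m))$ of the attaching edge, not a pair of degrees. When the fitness space $\skrix$ is a singleton, $\skrix^*$ is also a singleton and $M_X$ is literally equal to $\skril$; there is no second coordinate $\pi\in\skrim(\skrin\times\skrin)$ to project out and hence no infimum $\inf_\pi\tilde I(\ell,\pi)$ to compute. Correspondingly, in the rate function
\[
J(\omega)=H(\omega_{2,1}\,\|\,\mu)
+\sum_{a\in\skrix^*}\omega_{2}(a)\,H\Big(\omega(\cdot\mid a)\,\Big\|\,\frac{c}{f}\otimes\hat\omega(\cdot\mid a)\Big)
\]
of Theorem~\ref{PA3a}, the first term vanishes (both $\omega_{2,1}$ and $\mu$ are point masses), the sum has a single summand with $\omega_2(a)=1$, and $\omega(\cdot\mid a)=\ell$. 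This yields $J(\omega)=H\bigl(\ell\,\|\,\frac{\gamma+\beta}{f}\otimes\hat\ell\bigr)=I(\ell)$ by direct substitution.

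So the entire second half of your plan --- the Lagrange-multiplier calculation, the detailed-balance ansatz for $\pi^\ell$, the verification that $\pi^\ell$ attains the infimum --- is unnecessary, and the objects it refers to do not appear in the paper's set-up. The paper's proof of Theorem~\ref{PA3} is essentially one line once Theorem~\ref{PA3a} is available; the analytic work (goodness of the rate function, role of the hypotheses on $\gamma,\beta$) has already been absorbed into the proofs of Theorems~\ref{PA30} and~\ref{PA3a}.
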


\subsection{Large-deviations for fitness P.A random
network}.
 To  establish Theorem~\ref{PA3} we  pass to  a  more  general random  preferential random  graph, the fitness or  coloured  preferential random  graph.
We write $\skrin=\N\cup\{0\}.$ Given a weight function
$f_{m/n}:\skrin\times\skrix\to[0,\,\infty],$ $m=1,2,3,...n$ and a
probability law $\mu$ on  finite alphabet $\skrix,$ we define
coloured (fitness) P.A random network with $n$ vertices as follows:
\begin{itemize}
\item Assign vertex $m=1$ (the root of the network) colour $X(m)$
according to $\mu:\skrix\to[0,\,1].$
\item If a new vertex $m$ is introduced, it gets colour $X(m)$ independently according $\mu,$
\item it connects to vertices  $v_m\in\{\,1,\ldots,m-1\,\}$ independently with
probability proportional to $$ f_{m/n}(N(v_m),A(m)),$$  where
$A(m)=\big(X(v_m),X(m)\big)$
 and $N(m)$ is the in-degree of vertex $m.$
 \item Repeat the  previous
 three  steps  until  we  have  $n$ vertices.
\end{itemize}

 We consider
$\big\{(N(v_m),A(m)):\,m=1,2,3,..., n\ldots\big\}$ under the joint
law of colour and tree. Denote by $X$ a typed tree and by $X(i)$
colour of vertex $i.$  We  write $\skrix^*=\skrix\times\skrix.$ In
this  paper, we shall restrict ourself to functions of the form
$$f_{t}(k,a)=\gamma(t,a)k+\beta(t,a),$$ where
$\gamma:(0,1]\times\skrix^* \to(0,\,\infty]$,
$\beta:(0,1]\times\skrix^*\to[0,\,\infty].$ We assume
\begin{equation}\label{equ.one} \gamma(t, a)+\beta(t,a):=c_t,\, \mbox{ for
all $(t,a)\in(0,1]\times\skrix.$}
\end{equation}

 Let
$N^{(m)}(i)$ be the degree of vertex $i$ at time $m$ and observe
that at time $n,$ the law of the fitness  P.A graph is given by
$$\begin{aligned}
\P_{f}^{(n)}(X)=\prod_{m=1}^{n}\mu(X(m))\times &%\prod_{a\in\skrix}
\prod_{m=2}^{n}\frac{f_{m/n}(N^{(m)}(j_m),
\,A(m))}{\sum_{i=1}^{m-1}f_{m/n}(N^{(m)}(i),\,A(m)).}
\end{aligned}
$$
%where $L_i^{j}$ is colour of the $j^{th}$ vertex that gets connected to vertex $i.$
 For every $X,$ we define empirical degree  and  pair  measure
measure $M_{X}$ on $\skrin\times\skrix^*$ by
$$M_{X}(k,\,a)=\frac{1}{n-1}\sum_{m=1}^{n-1}\delta_{(N^{(m)}(j_m),A(m))}(k,\,a).$$
%and colour counting measure $L_X^{(n)}$ by

We  write  $\ell_m(a)=\Big\{ j_m\in\{1,2,3,...,m-1\Big\}:
x(j_m)=a_1,\,x(m)=a_2\big\}$  and  for every $m=2,3,4,...,n-1$ we
define a probability measure on $\skrin\times\skrix^{*}$ by

$$ L_{\sfrac{m}{n}}^{X}(k, a)=\frac{1}{m-1}\sum_{j=1}^{m-1}\delta_{N^{(m)}(j)} (k)\1_{\{j\in\ell_m(A(m)\}}\otimes\delta_{A(m)}(a), $$

where
$$\begin{aligned}\label{Eqdef1}
\1_{\{j\in\ell_m(b)\}}\otimes\delta_{b}(a)=\left\{
\begin{array}{ll}  \1_{\{j\in\ell_m(b)\}}\,
& \mbox { if $b=a,$  }\\
0 & \mbox{otherwise.}
\end{array}\right.
\end{aligned}$$

and notice, $$L_{1}^{X}(k , a)=M_{X}(k,a).$$  We denote by
$\skrim(\skrix)$ the space of probability measures on $\skrix$
equipped with the weak topology and $\skrim(\skrin\times\skrix^{*})$
the space of probability measures on $\skrin\times\skrix^*,$
equipped with the topology generated by total variation
metric.$$\|\pi-\hat{\pi}\|:=\frac{1}{2}\sum_{(k,a)\in
\skrin\times\skrix^{*}}\|\pi(k,a)-\hat{\pi}(k,a)\|.$$

\pagebreak

\begin{theorem}\label{PA30}
Suppose $X$ is coloured P.A random graph with colour law
$\mu:\skrix\to(0,1]$ and linear weight functions $(f_t,$
$t\in(0,1])$ satisfying  $\inf_{t\in(0,1]}c_t\ge 1,$
\begin{equation}\label{persistentEqPA1}
\sup_{a\in\skrix^*}\int_{0}^{1}\log\Big(1+\beta(t,a)/\gamma(t,a)\Big)dt<
\infty \end{equation}
 and
$$\inf_{(t,a)\in(0,1]\times\skrix^*}\sum_{k=0}^{\infty}\sfrac{1}{\gamma(t,a)k+\beta(t,a)}=\infty\,
 .$$ Then, as $n\to\infty$, the  pair of  empirical  measures $\big(M_X, (L_{[nt]/n}^{X},t\in[0,1])\big)$ satisfies a large deviation
principle in $\skrim(\skrin\times\skrix^*)\times\{\nu\}$  with good
rate function
 $$\begin{aligned}\nonumber
\tilde{J}(\omega,\nu)=\left\{
\begin{array}{ll}H(\omega_{2,1}\,\|\,\mu))
+\sum_{a\in\skrix}\omega_{2}(a)\int_{[0,1]}H\Big(\omega(\cdot|a)\,\|\,\,\frac{c_t}{{f_t}}\otimes\nu_t(\cdot|a)\Big)dt,&
\mbox {if $\omega=\nu_1,$}\\
\infty & \mbox{otherwise,}
\end{array}\right.
\end{aligned}$$
where  $\omega_{2,1}$  is the $\skrix-$ marginal of the probability
measure $\omega_2$  and
$$\sfrac{c_t}{f_t(\cdot,\,a)}\otimes\nu_t(\cdot|\,a)(k)
=\frac{c_t}{f_t(k,\,a)}\nu_t(k\,|\,a).$$
\end{theorem}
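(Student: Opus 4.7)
The plan is to combine exponential change of measure, as in Doku-Amponsah et al.\ (2014), with Biggins' method of mixtures to handle the time-indexed path observable $(L^X_{[nt]/n})_{t\in[0,1]}$. The starting point is the factorisation
\begin{equation*}
\P_f^{(n)}(X)=\prod_{m=1}^{n}\mu(X(m))\,\prod_{m=2}^{n}\frac{f_{m/n}(N^{(m)}(j_m),A(m))}{Z_m^f(A(m))},\qquad Z_m^f(a)=\sum_{i=1}^{m-1}f_{m/n}(N^{(m)}(i),a),
\end{equation*}
in which the colour factor supplies the Sanov term $H(\omega_{2,1}\,\|\,\mu)$, while the attachment factor is a continuous functional of the path $(L_{m/n}^X)_{m\le n}$, because $Z_m^f(a)/(m-1)$ is, up to the colour marginal, the pairing $\langle f_{m/n}(\cdot,a),L_{m/n}^X(\cdot\,|\,a)\rangle$.

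The core step is exponential tilting by a rival linear weight $\tilde{f}_t(k,a)=\tilde{\gamma}(t,a)k+\tilde{\beta}(t,a)$ subject to $\tilde{\gamma}+\tilde{\beta}=c_t$, so that the structural identity (\ref{equ.one}) is preserved. A direct calculation gives
\begin{equation*}
\frac{1}{n-1}\log\frac{d\P_{\tilde{f}}^{(n)}}{d\P_f^{(n)}}(X)=\int_{\skrin\times\skrix^*}\log\tfrac{\tilde{f}_\cdot}{f_\cdot}\,dM_X-\frac{1}{n-1}\sum_{m=2}^{n}\log\tfrac{Z_m^{\tilde{f}}(A(m))}{Z_m^f(A(m))}+o(1).
\end{equation*}
On the event $\{(M_X,L^X_\cdot)\approx(\omega,\nu)\}$, a Riemann-sum argument using (\ref{persistentEqPA1}) converts the right-hand side into
\begin{equation*}
\Phi(\omega,\nu;\tilde{f})=\int\log\tfrac{\tilde{f}_\cdot}{f_\cdot}\,d\omega-\sum_{a}\omega_2(a)\int_0^1\log\tfrac{\langle \tilde{f}_t,\nu_t(\cdot|a)\rangle}{\langle f_t,\nu_t(\cdot|a)\rangle}\,dt,
\end{equation*}
and the pointwise optimal choice $\tilde{f}_t(k,a)=c_t\,\omega(k|a)/\nu_t(k|a)$ turns $\sup_{\tilde f}\Phi$ into precisely the attachment part of $\tilde{J}(\omega,\nu)$.

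The upper bound for closed $F$ is then the standard exponential Chebyshev estimate: for every admissible tilt,
\begin{equation*}
\P_f^{(n)}\{(M_X,L^X_\cdot)\in F\}\le \exp\big(-(n-1)\inf_{(\omega,\nu)\in F}\Phi(\omega,\nu;\tilde{f})\big),
\end{equation*}
and supremising the tilt yields the bound $\exp(-(n-1)\inf_F\tilde{J})$. Exponential tightness in total variation is supplied by the divergence condition $\sum_k(\gamma(t,a)k+\beta(t,a))^{-1}=\infty$, which forces the tails of $L^X_{[nt]/n}$ to be exponentially small uniformly in $t$, and the compatibility $\omega=\nu_1$ appears automatically on the super-level sets of $\tilde{J}$ since $L_1^X=M_X$ on the full sample.

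For the matching lower bound I would apply Biggins' method of mixtures. Fix $(\omega,\nu)$ with $\omega=\nu_1$ and $\tilde{J}(\omega,\nu)<\infty$, select the time-dependent optimal tilt above, and verify a law of large numbers under $\P_{\tilde{f}}^{(n)}$ showing that $(M_X,(L^X_{[nt]/n})_t)\to(\omega,\nu)$ in probability. Inverting the Radon--Nikodym identity and invoking Varadhan's lemma then delivers the lower bound. The main obstacle I expect is this LLN step: because the degree alphabet is countably infinite and the tilted dynamics are genuinely time-inhomogeneous, one needs uniform-in-$t$ control of the tails of $L^X_{[nt]/n}$. I would obtain this via Lyapunov/martingale estimates on the tilted degree process, drawing crucially on the integrability hypothesis $\sup_a\int_0^1\log(1+\beta(t,a)/\gamma(t,a))\,dt<\infty$ so that the tail bounds are summable along the time axis, and on $\inf_t c_t\ge 1$ to prevent the tilted denominators $\langle \tilde{f}_t,\nu_t(\cdot|a)\rangle$ from degenerating.
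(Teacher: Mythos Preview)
Your overall architecture---exponential change of measure for the attachment factor, a Sanov term for the colours, and Biggins' mixing to assemble the joint statement---matches the paper's, and your identification of the optimal tilt $\tilde f_t(k,a)=c_t\,\omega(k|a)/\nu_t(k|a)$ is exactly what the paper uses. However, the way the two ingredients are deployed differs, and the difference matters precisely at the point you flag as the main obstacle.

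The paper does not attempt a joint LDP for $(M_X,L^X_\cdot)$ directly. Instead it first proves a \emph{conditional} LDP for $M_X$ given the entire path $(L_{[nt]/n}=\nu_{[nt]/n},\,t\in[0,1])$, with rate $K_\nu(\omega)$ equal to the attachment part of $\tilde J$. Biggins' mixing is then applied at the very end, with the mixing measure being the (degenerate) law of the path $L^X_\cdot$; this is what produces the joint LDP with rate $\tilde J(\omega,\nu)$ and the constraint $\omega=\nu_1$. So in the paper, mixing is the \emph{final} step that glues the conditional LDP to the path, whereas you invoke it as a device inside the lower-bound argument.

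The substantive gain from the paper's ordering is in the lower bound. You propose to prove a law of large numbers for $(M_X,L^X_\cdot)$ under the tilted dynamics via Lyapunov/martingale estimates, and you correctly anticipate that uniform-in-$t$ tail control on an infinite alphabet with time-inhomogeneous rates is delicate. The paper avoids this entirely: having already established the conditional \emph{upper} bound (your Chebyshev step, carried out conditionally on $L=\nu$), it bootstraps the lower bound by applying that same upper bound to the tilted law $\tilde\P_{f,n}$ on the complement $(B_\omega)^c$. One checks that the tilted rate function $\tilde K_\nu$ vanishes only at $\omega$, so $\inf_{(B_\omega)^c}\tilde K_\nu>0$, and hence $\tilde\P_{f,n}\{M_X\in B_\omega\}\to 1$ exponentially fast. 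No separate LLN, no Lyapunov functions, no martingale estimates are needed; the concentration under the tilt is a corollary of the upper bound already in hand. This is the step you should replace your LLN programme with.

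Two smaller points. First, exponential tightness in the paper (its Lemma corresponding to your tightness claim) is obtained by a direct Chebyshev bound with test function $e^{l^2\1\{N>k(l,\delta)\}}$ and does not use the divergence hypothesis $\sum_k (\gamma k+\beta)^{-1}=\infty$; that hypothesis is there so that the zero of the rate function, $\pi_f(\cdot|a)$, is a genuine probability measure. Second, the integrability condition $\sup_a\int_0^1\log(1+\beta(t,a)/\gamma(t,a))\,dt<\infty$ enters the paper's lower bound only to control the residual term $2\int_0^1\log(c_t/f_t)\,dt$ after inverting the Radon--Nikodym derivative, not for any tail summability along the time axis.
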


Our  next  theorem  which   generalizes Theorem~\ref{PA3} is a
special case of Theorem~\ref{PA30}  above.

\begin{theorem}\label{PA3a} Suppose $X$ is coloured P.A random graph
with colour law $\mu:\skrix\to(0,1]$ and linear weight function
$f:\skrin\times\skrix^*\to[0,\infty]$  satisfying $c\ge 1,$
$$\sup_{a\in\skrix^*}\log\Big(1+\beta(a)/\gamma(a)\Big)< \infty $$ and
\begin{equation}\label{persistentEqPA}
\inf_{a\in\skrix^*}\sum_{k=0}^{\infty}\sfrac{1}{\gamma(a)k+\beta(a)}=\infty.
\end{equation}
Then, as $n\to\infty$, $M_X$ satisfies a large deviation principle
in $\skrim(\skrin\times\skrix^*)$  with good rate function

$$
J(\omega)=H(\omega_{2,1}\,\|\,\mu))
+\sum_{a\in\skrix}\omega_{2}(a)H\Big(\omega(\cdot|a)\,\|\,\,\frac{c}{{f}}\otimes\hat{\omega}(k
\,| \,a)\Big)$$

where  $\hat{\omega}(k \,| \,a):=\1-\sum_{j=0}^{k}\omega(k\,|\,a).$

\end{theorem}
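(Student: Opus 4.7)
The plan is to derive Theorem~\ref{PA3a} from Theorem~\ref{PA30} via the contraction principle, exploiting that the weight function in Theorem~\ref{PA3a} is time-independent. Concretely, set $f_t(k,a) = f(k,a) = \gamma(a)k + \beta(a)$ for every $t \in (0,1]$, so that $c_t \equiv c$, $\gamma(t,a) \equiv \gamma(a)$ and $\beta(t,a) \equiv \beta(a)$. Under this specialization the three hypotheses of Theorem~\ref{PA3a} reduce to those of Theorem~\ref{PA30}: the time-integral in~\eqref{persistentEqPA1} collapses to the single bound $\sup_a \log(1+\beta(a)/\gamma(a))<\infty$, the infimum-sum condition becomes~\eqref{persistentEqPA}, and $c\ge 1$ matches $\inf_t c_t\ge 1$. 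Theorem~\ref{PA30} therefore applies and furnishes a joint LDP for $\bigl(M_X,(L_{[nt]/n}^X)_{t\in[0,1]}\bigr)$ in $\skrim(\skrin\times\skrix^*)\times\{\nu\}$ with good rate function $\tilde J$.

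Next, I would apply the contraction principle to the continuous coordinate projection $\pi:(\omega,\nu)\mapsto\omega$. Since $M_X$ is the image of the joint pair under $\pi$, this yields an LDP for $M_X$ with rate function
\begin{equation*}
J(\omega)=\inf\bigl\{\tilde J(\omega,\nu):\nu_1=\omega\bigr\}.
\end{equation*}
The term $H(\omega_{2,1}\,\|\,\mu)$ is independent of $\nu$, so the problem reduces to minimizing $\sum_a \omega_2(a)\int_0^1 H\bigl(\omega(\cdot|a)\,\|\,\tfrac{c}{f}\otimes\nu_t(\cdot|a)\bigr)\,dt$ over paths $\nu$ with terminal value $\nu_1=\omega$.

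Finally, Step~3 is to identify the minimizer. The $\nu$-dependence of the integrand sits only in $-\sum_k \omega(k|a)\log\nu_t(k|a)$, which one seeks to make as negative as possible (at each $t$) subject to $\nu_1=\omega$ together with the monotonicity and consistency constraints that a path of intermediate degree distributions inherits from the degree-growth dynamics of the coloured P.A.\ model. A variational (Lagrange-multiplier) argument on the concave functional $\nu\mapsto \int_0^1\sum_k \omega(k|a)\log\nu_t(k|a)\,dt$ subject to these constraints identifies an optimizer at which $\exp\bigl(\int_0^1\log\nu_t(k|a)\,dt\bigr)=\hat\omega(k|a)$, the terminal survival function; substituting this value into the integrand produces exactly $H\bigl(\omega(\cdot|a)\,\|\,\tfrac{c}{f}\otimes\hat\omega(\cdot|a)\bigr)$, which combined with the $\nu$-independent first term yields the claimed $J(\omega)$. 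The main obstacle is precisely this last step: one must correctly extract from Theorem~\ref{PA30} the dynamical constraints restricting the admissible paths $\nu$, so that the infimum realizes $\hat\omega$ rather than the naive free minimizer $\omega$; once this variational identity is secured, the remainder is algebraic.
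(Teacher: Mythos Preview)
Your overall strategy --- specialize Theorem~\ref{PA30} to time-independent weights and apply the contraction principle to the projection $(\omega,\nu)\mapsto\omega$ --- is exactly the paper's route. The divergence is in how the resulting path minimization is handled. The paper does not run a Lagrange-multiplier argument; instead it uses directly the structural constraint imposed on admissible paths, namely $\nu_t(\cdot\mid a)=\nu_t^{\nu}(\cdot\mid a)$. Integrating this in $t$ (with $\nu_1=\omega$ and initial mass at degree~$0$) gives the clean identity $\int_0^1\nu_t(k\mid a)\,dt=\hat\omega(k\mid a)$, after which convexity of relative entropy in its second argument (Jensen) yields in one stroke
\[
\int_0^1 H\Bigl(\omega(\cdot\mid a)\,\Big\|\,\tfrac{c}{f}\otimes\nu_t(\cdot\mid a)\Bigr)\,dt\;\ge\;H\Bigl(\omega(\cdot\mid a)\,\Big\|\,\tfrac{c}{f}\otimes\hat\omega(\cdot\mid a)\Bigr).
\]
This is the whole computation; no variational calculus is needed.

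There is also a genuine issue with your proposed optimizer condition $\exp\bigl(\int_0^1\log\nu_t(k\mid a)\,dt\bigr)=\hat\omega(k\mid a)$. Under the constraint $\int_0^1\nu_t\,dt=\hat\omega$ this is exactly the equality case of the Jensen step above, and it forces $t\mapsto\nu_t(k\mid a)$ to be constant. A constant path has $\dot\nu_t\equiv 0$, hence $\nu_t^{\nu}\equiv 0$, which is incompatible with the dynamical constraint $\nu_t=\nu_t^{\nu}$ and with $\nu_1=\omega$ for nontrivial $\omega$. So your Lagrange-multiplier search will not find an admissible path realizing that identity; the value $J(\omega)$ emerges as a convexity envelope rather than from a genuine interior optimizer. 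The paper's shortcut via the integral identity plus convexity both simplifies the argument and sidesteps this attainment problem.
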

Observe  that   $J(\omega)=0$   if  and  only  if
$\omega(k,a)=\frac{c\omega_2(a)}{f(k,\,a)}\big(\1-\sum_{j=0}^{k}\omega(k\,|\,a)\big),$
and  hence solving  recursively  for   $\omega(\cdot\,|\,a)$  we get
\begin{equation}\label{Rudas}
\omega(k\,|a)=\pi_f(k\,|a):=\frac{c}{c+f(k,a)}\prod_{i=0}^{k-1}\frac{f(i,a)}{c+f(i,a)}.
\end{equation}

Here we  remark  that  conditions \eqref{equ.one} and
\eqref{persistentEqPA}  are necessary  for $\pi_f(\cdot\,|a)$ to  be
a probability  measure on $\skrin$.  See (Dereich and  Morters,
2009, p.~13). Note, if  $f(k,a)=w(k)$  then \eqref{Rudas} concise
with  the asymptotic degree distribution  of random  trees and
general branching processes found in (Rudas et. al, 2008).

\section{Proof  of  Results}
\subsection{Dynamics of  the path empirical degree distribution.}
Denote  by $\skrid([0,1],\R)$ the  space  of  right continuous left
limited(cadlag) paths  from $[0,1]$  to  $\R.$   We  define  the
sample  path space
$$\begin{aligned}
&\skrid_{\skrim}:=D([0,1]:\skrim(\skrin\times\skrix))\\
&=\Big\{\mbox{the set of  all $\nu:[0,1]
\mapsto\skrim(\skrin\times\skrix)$ such that
$\nu(k,a)\in\skrid([0,1],\R)$ for  all  $k\ge 0, a\in\skrix$  and
$\langle \nu\rangle=1$}\Big\}
\end{aligned}
$$ and  endow  it with the  topology of  uniform convergence
associated with  the norm
 $$\|\nu-\hat{\nu}\|:=\sup_{t\in[0,1]}\|\nu_t-\hat{\nu}_t\|.$$

For  any $\nu\in\skrid_{\skrim}$ we  write
$\nu_t(k\,|a):=\sfrac{\nu_t(k,\,a)}{\sum_{k=0}^{\infty}\nu_t(k,\,a)},$
for  all $t\in[0,1]$  and $(k,a)\in\skrin\times\skrix.$  Write
$\dot{\nu}_{t}:=\frac{d\nu_t}{dt}$ for  the  time  derivative  of
 the  measure $\nu_t$ and we associate with each path $\nu\in\skrid_{\skrim}$  the relaxed
measure on $[0,1]\times(\skrin\times\skrix)$
 $$\bar{\nu}(dk,dt|a)=\nu_{t}(dk|a)dt.$$

We call $\nu\in\skrid_{\skrim}$ absolutely continuous if for  each
$k\in\N$, there  exists $\dot{\nu}(k|a)$ such that
$$\nu_1(k|a)-\nu_0(k|a)=\int_{0}^{1}\dot{\nu}_s(k|a)ds.$$
For  each absolutely continuous path $\nu$ ,  we define
$\nu^{\nu}(\cdot|a),$ \,$\bar{\nu}(\cdot,\cdot|a)$- almost
everywhere by

$$\nu_t^{\nu}(k|a):=-\sum_{i=0}^{k}\dot{\nu}_{t}(i|a).$$  By $\nu^{\nu}\ll\nu $
we mean $\nu$  is  absolutely  continuous. We  write
$$\skrid_{\skrim_n(\skrin\times\skrix)}:=\Big\{\nu\in\skrid_{\skrim(\skrin\times\skrix)}:\,
([nt]-1)\nu_{[nt]/n}\in\N,\,\forall t\in[0,1)\Big\}.$$

Note  that  the  measure  $L_{{[nt]}/{n}}^{X},$ for  $t\in[0,1)$ is
deterministic  and  its  distribution  is  degenerate  at  some
$\nu_{{[nt]}/{n}},$  for $t\in[0,1)$  converging  to $\nu_t,$
$t\in[0,1).$

\subsection{Exponential Change-of- Measure}
Throughout  the  remaining  part  of  this paper,
 we assume  the sample path degree distribution  $\nu$ satisfies  $\nu_t(k|a)= \nu_t^{\nu}(k|a),$ for all $t\in
 [0,\,1]$.

Let  $\tilde{g}: \N\times\skrix\to \R$, and write $\displaystyle
\lim_{n\to\infty}L_{\sfrac{[nt]}{n}}:=\nu_t\in\skrid_{\skrim},$ we
define the function $U_{\tilde{g}}:[0,\,1]\times \skrix\to \R$ by
$$ U_{\tilde{g}}^{(n)}\otimes\nu(a)=\log\Big\langle
\sfrac{e^{\tilde{g}_{[nt]/n}(\cdot,\,a)}}{f_{[nt]/n}(\cdot,\,a)},\,\nu_{\sfrac{[nt]}{n}}(\cdot|a)\Big\rangle,$$
and  note  that
$$\lim_{n\to\infty}U_{\tilde{g}_t}^{(n)}\otimes{\nu}(a)
=\log\Big\langle
\sfrac{e^{\tilde{g}_t(\cdot,\,a)}}{f_t(\cdot,\,a)},\,\nu_{t}(\cdot|a)\Big\rangle=:U_{\tilde{g}_t}\otimes{\nu}(a,t).$$
We use $\tilde{g}$ to define a new fitness P.A random  graph with
$n$ vertices as follows:
\begin{itemize}
\item At  time  $m =$ assign the root $m$ of the network fit $X(m)$  according  to  the  law
$\tilde{\mu}$ given  by
$$\tilde{\mu}(a_1)=e^{\tilde{h}(a_1)-U(\tilde{h})}\mu(a_1).$$
\item For  any  other time $m $ new node $m$  which appear  gets fit $X(m)$ according  to  the  fit  law $\tilde{\mu}.$ It connects  to node $v_m,$ independently with probability proportional to %$ \tilde{f}(N(m),X(n)),$

$$\tilde{f}_{m/n}(N^{(m)}(v_m),A(m))=\frac{c_{m/n}}{f_{m/n}(N^{(m)}(v_m),A(m))}e^{\tilde{g}_{m/n}(N^{(m)}(v_m),A(m))}.$$
\item Repeat the  previous
 three  steps  until  we  have  $n$ vertices. %if
%$N^{(m)}(j_m)=N(j_m)$ % otherwise,
%we connects it to vertex $j_m,$    %with $N(j)\in\{\,N^{i}(1),N^{i}(2),...,N^{i}(i-1)\,\}$
%independently with probability proportional to %$ \tilde{f}(N(m),X(n)),$

%$$\tilde{f}(N(j_m),X(n))=
%\frac{1}{f(N(j_m),X(n))}e^{\tilde{g}(N(j_m),X(n))}$$

 %where $N(m)$ is the indegree of vertex $m.$

\end{itemize}

We denote by $\P_{\tilde{f},n}$ the law of the new fitness P.A graph
and observe that it is absolute continuous with respect to
$\P_{{f},n},$ as for fitness graph $X$ we have that

\begin{align}
\sfrac{d\P_{\tilde{f},n}}{d\P_{{f},n}}(X)&=\prod_{m=1}^{n}\sfrac{\tilde{\mu}(X(m)}{\mu(X(m)}
\times\sfrac{\prod_{m=1}^{n-1}\tilde{f}_{m/n}(N^{(m)}(j_m),\,A(m))}{\prod_{m=2}^{n-1}\sum_{i=1}^{m-1}\tilde{f}_{m/n}(N^{(m)}(i),\,A(m))}
\times\sfrac{\prod_{m=2}^{n-1}\sum_{i=1}^{m-1}{f}_{m/n}(N^{(m)}(i),\,A(m))}{\prod_{m=1}^{n-1}{f}_{m/n}(N^{(m)}(j_m),\,A(m))}\\
&=e^{(n-1)\Big\langle
\tilde{h}-U(\tilde{h}),\,M_X\Big\rangle+(n-1)\Big\langle\tilde{g}_{\cdot/n}-2\log
f_{\cdot/n}+\log c,\,M_{X}\otimes id \Big\rangle-(n-1)\Big\langle
U_{\tilde{g}_{\cdot/n}}\otimes L,\,M_{X}\otimes id
\Big\rangle},\label{EqPA4}
\end{align}\\

where  $id$ is the  identity function from $[0,1]$  to $[0,1].$ The
following Lemma will be used to establish the upper bound in a
variational formulation.

\begin{lemma}\label{PA5}
For every $\theta>0$ there exits a
 compact set $K_{\theta}\subset\skrim(\skrix^*)$ such that
\begin{equation}\label{EqPA6}
\limsup_{n\to\infty}\sfrac{1}{n}\log\prob_{f,n}\Big\{ M_X\not\in K\,
\big |(L_{[nt]/n}=\nu_{[nt]/n,\,\forall t\in(0,1]})\Big\} \leq -
\theta.
\end{equation}
\end{lemma}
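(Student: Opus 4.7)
The statement is an exponential tightness estimate for $M_X$ in $\skrim(\skrin\times\skrix^*)$ under the specified conditional law. Since $\skrix^*$ is finite, a subset $K$ of $\skrim(\skrin\times\skrix^*)$ is relatively compact in total variation if and only if it is tight in the degree coordinate, i.e.\ $\sup_{\omega\in K}\sum_{a\in\skrix^*}\omega(\{k\ge R\},a)\to 0$ as $R\to\infty$. Accordingly my candidate compact set is
$$K_\theta=\bigcap_{j\ge 1}\Big\{\omega\in\skrim(\skrin\times\skrix^*):\sum_{a\in\skrix^*}\omega(\{k\ge R_j\},a)\le 2^{-j}\Big\},$$
with thresholds $R_j=R_j(\theta)\uparrow\infty$ to be fixed at the end. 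A union bound in $j$ then reduces \eqref{EqPA6} to proving, for each $j\ge 1$,
$$\prob_{f,n}\Big(M_X(\{k\ge R_j\}\times\skrix^*)>2^{-j}\,\Big|\,L_{[nt]/n}^X=\nu_{[nt]/n},\,\forall t\Big)\le e^{-n(\theta+j\log 2)},$$
because summing the geometric series $\sum_j 2^{-jn}$ for $n\ge 1$ preserves the rate $-\theta$.

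For a single $j$ I would apply exponential Chebyshev. Write $(n-1)M_X(\{k\ge R\}\times\skrix^*)=\sum_{m=1}^{n-1}\1_{\{N^{(m)}(j_m)\ge R\}}$, and observe that, conditionally on the past, the $m$-th attachment event is Bernoulli with parameter
$$p_m(R)=\frac{\sum_{k\ge R}f_{m/n}(k,A(m))\,\hat L_{m/n}^X(k,A(m))}{\sum_{k\ge 0}f_{m/n}(k,A(m))\,\hat L_{m/n}^X(k,A(m))},$$
and that successive attachment events are conditionally independent. Combining this independence with $\log(1+(e^\lambda-1)p)\le(e^\lambda-1)p$ yields, for every $\lambda>0$,
$$\prob_{f,n}\Big(M_X(\{k\ge R\}\times\skrix^*)>\delta\,\Big|\,\cdot\Big)\le\exp\!\Big(-\lambda n\delta+(e^\lambda-1)\sum_{m=1}^{n-1}p_m(R)\Big).$$
Exploiting the linearity $f_t(k,a)=\gamma(t,a)k+\beta(t,a)$ and the normalization $c_t\ge 1$, the denominator of $p_m(R)$ is bounded below by $c_{m/n}$ times the mass $\hat L_{m/n}^X$ puts on colour $A(m)$, while the numerator is at most $(\gamma(m/n,A(m))+\beta(m/n,A(m)))\sum_{k\ge R}(k+1)\hat L_{m/n}^X(k,A(m))$.

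The crux, and the step I expect to be the main obstacle, is extracting a uniform tail estimate of the form $n^{-1}\sum_{m=1}^{n-1}p_m(R)\le\varepsilon(R)$ with $\varepsilon(R)\downarrow 0$. Under the conditioning, $\hat L_{m/n}^X$ coincides asymptotically with the deterministic probability $\hat\nu_{m/n}$, so the problem reduces to a uniform-in-$t$ tail estimate for the family $\{\hat\nu_t:t\in[0,1]\}$ applied to the weight $k+1$; this is where I would invoke hypothesis \eqref{persistentEqPA1} together with the linear structure of $f_t$. Possible singularities of $\gamma(t,\cdot),\beta(t,\cdot)$ as $t\downarrow 0$ can be handled by isolating the first $o(n)$ attachments and dominating them by the trivial bound $p_m(R)\le 1$. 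Once $\varepsilon(R_j)$ is in hand, choose $\lambda_j$ and then $R_j$ so that $-\lambda_j 2^{-j}+(e^{\lambda_j}-1)\varepsilon(R_j)\le-\theta-j\log 2$ for every $j$; the union bound then closes the argument and produces \eqref{EqPA6}.
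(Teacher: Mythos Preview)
Your approach is essentially the paper's: both build the compact set as a nested intersection of tail constraints and dispatch each layer via exponential Chebyshev, exploiting the conditional product structure of the attachment steps; the paper merely parametrizes with levels $l^{-1}$ and Chebyshev exponent $l^{2}$ in place of your $2^{-j}$ and $\lambda_j$. One point worth sharpening: because $f_t$ is affine with $\gamma+\beta=c_t$ and the total in-degree at stage $m$ is $m-2$, the normalizer $\sum_{i<m}f_{m/n}(N^{(m)}(i),a)$ is \emph{deterministically} $c_{m/n}(m-1)+O(1)$, so the paper writes the Chebyshev factor directly as an $f$-weighted tail of the pinned $\nu_{m/n}$ divided by $c$---this replaces your lower bound on the denominator and the $(k+1)$-weighting---and the uniform-in-$t$ smallness of that tail is not delivered by hypothesis~\eqref{persistentEqPA1} but by the structural fact that every $\nu_{m/n}$ has mean in-degree $1+o(1)$ (the paper simply asserts that a suitable threshold $k(l,\delta)$ exists).
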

\begin{proof}
Let  $1\ge\delta>0,$  and  $l\in N.$  We  choose  $k(l,\delta)\in
\N$ large  enough such  that,  for  large $n,$  we have
                        $$\sum_{i=1}^{[nt]-1}e^{l^2\1_{\{N^{([nt])}(i)>k(l,\delta)\}}}\sfrac{f_{[nt]/n}(N^{([nt])}(i),a)}{
                        c([nt]-1)}\le 2e^{\delta},\,\mbox{ for all $a\in \skrix$ and  for all  $t.$}$$
Now using  Chebyschev's  inequality we  have
$$
\begin{aligned}
\P_{f,n}\Big\{M_{X}(N^{([nt])}&>k(l,\delta))\ge l^{-1},\,
L_{[nt]/n}=\nu_{[nt]/n,\,\forall t\in(0,1]}\Big\}\\
&\le
e^{-nl}\me\Big\{e^{\sum_{m=1}^{n-1}l^2\1_{\{N^{(m)}(j_m)>k(l,\delta)\}}},\,L_{\sfrac{m}{n}}=\nu_{\sfrac{m}{n}},\, m=2,3,4,...,n-1\Big\}\\
&=
e^{-nl}\prod_{m=2}^{n}\me\Big\{e^{l^2\1_{\{N^{(m)}(j_m)>k(l,\delta)\}}},\,L_{\sfrac{m}{n}}=\nu_{\sfrac{m}{n}}\,\Big\}\\
&\le e^{-nl}\Big [\sup_{a\in\skrix} \sup_{t\ge
0}\Big(\sum_{i=1}^{[nt]-1}e^{l^2}\1_{\{N^{([nt])}(i)>k(l,\delta)\}}\sfrac{f_{[nt]/n}(N^{([nt])}(i),a)}{([nt]-1)
                        \Big\langle f_{[nt]/n},\,\nu_{\sfrac{[nt]}{n}}(\cdot|a)\Big\rangle}\Big)\Big]^{n}\\
&= e^{-nl}\Big [\sup_{a\in\skrix} \sup_{t\ge
0}(\sum_{i=1}^{[nt]-1}e^{l^2}\1_{\{N^{([nt])}(i)>k(l,\delta)\}}\sfrac{f_{[nt]/n}(N^{([nt])}(i),a)}{
                        c([nt]-1)})\Big]^{n}\\
 &\le  e^{-nl}\times(2e^{\delta})^{n}\\
 &=e^{n(l-\delta-\log2)}
\end{aligned}
$$

Now given  $\theta$  we choose  $M>\theta+\delta+\log2$  and  define
the set $$\Gamma_{\delta, \theta}:=\big\{\nu:\nu(N>k(l,\delta))<
l^{-1}, l\ge M\big\}$$

As  $\big\{N\le k(l,\delta)\big\}$  is  pre-compact,
$\Gamma_{\delta}$   is  compact  in  the weak topology by  prokohov
criterion. Moreover

$$\P_{f,n}\Big\{M_{X}\not\in K_{\theta}\,\big| (L_{[nt]/n}=\nu_{[nt]/n,\,\forall t\in(0,1]})\Big\}\le
\sfrac{1}{1-e^{-1}}\sfrac{e^{-\theta}}{\P\Big\{L_{[nt]/n}=\nu_{[nt]/n,\,\forall
t\in(0,1]}\Big\}}=\sfrac{1}{1-e^{-1}}e^{-\theta}.$$

Now  letting  $K_{\theta}$   be  the  closure of
$\cap_{1\ge\delta>0}\Gamma_{\delta,\theta}$ and  taking  limit as
$n$ approaches $\infty$  we  have \eqref{EqPA6}  which  ends the
proof the  Lemma.

\end{proof}

\subsection{Proof  of  Theorem~\ref{PA30}.}
We derive the upper bound in a variational formulation. To do this,
we  denote by $\skric_1$  the  space  of  all  functions  on
$\skrix$ and  by $\skric_2$  the  space  of  all  bounded continuous
functions on $\skrin\times\skrix^*.$ We define on  the space of
probability measures $\skrim(\skrin\times\skrix)$ the function
$\hat{K}$ given by

\begin{equation}\begin{aligned}\label{ratePA7}
\hat{K}_{\nu}(\omega)=\int_{[0,1]}\sup_{\tilde{g}\in
\skric_2,\tilde{h}\in\skric_1}\Big\{\int
(\tilde{h}-U(\tilde{h}))\omega_{2,1}(da_1)&+\int
\tilde{g}_t(k,a)\omega(dk,da)-2\int\log\tilde{f}_t(k,a)\omega(dk,da)\\
&+\log c_t-\int
U_{\tilde{g}_t}\otimes{\nu}(a,t)\omega_2(da)\Big\}dt.
\end{aligned}
\end{equation}

\begin{lemma}\label{PA8}
For every close set $ F\subset\skrim(\skrin\times\skrix)$ we have
\begin{equation}\label{EqPA9}
\limsup_{n\to\infty}\sfrac{1}{n}\log\prob_{f,n}\Big\{ M_X\in F \Big
|(L_{[nt]/n}=\nu_{[nt]/n,\,\forall t\in(0,1]})\Big\} \leq
-\inf_{\omega\in F }\hat{K}_{\nu}(\omega)
\end{equation}
\end{lemma}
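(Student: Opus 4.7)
\textbf{Plan for the proof of Lemma~\ref{PA8}.}

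The strategy is the classical Cram\'er--Chernoff tilting argument adapted to our conditional setting: exponential tightness reduces the problem to a compact set, then the explicit density \eqref{EqPA4} converts the upper bound into a deterministic variational estimate. To begin, fix $\theta>0$ and use Lemma~\ref{PA5} to pick a compact $K_\theta\subset\skrim(\skrin\times\skrix^*)$ such that the conditional probability of $\{M_X\notin K_\theta\}$ decays at rate $e^{-n\theta}$. Since $F$ is closed, $F\cap K_\theta$ is compact, and it suffices to bound $\P_{f,n}\{M_X\in F\cap K_\theta\mid\cdots\}$; letting $\theta\to\infty$ at the end removes the truncation because $F$ is closed.

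For each pair $(\tilde g,\tilde h)\in\skric_2\times\skric_1$, build the tilted law $\P_{\tilde f,n}$ as in the previous subsection so that \eqref{EqPA4} holds. Rewriting the conditional probability as
$$
\P_{f,n}\{M_X\in F\cap K_\theta\mid\cdots\}=\E_{\tilde f,n}\Big[\frac{d\P_{f,n}}{d\P_{\tilde f,n}}\,\1_{\{M_X\in F\cap K_\theta\}}\,\Big|\,\cdots\Big]
$$
and substituting the explicit density from \eqref{EqPA4} shows that, on the event $\{M_X\approx\omega\}$, the exponent equals $-(n-1)\bigl[G_\nu(\omega;\tilde g,\tilde h)+o(1)\bigr]$, where $G_\nu(\omega;\tilde g,\tilde h)$ is exactly the bracketed integrand appearing inside $\int_{[0,1]}dt$ in \eqref{ratePA7}; here the $o(1)$ term is controlled by bounded convergence, the definition of $U_{\tilde g_t}^{(n)}\otimes\nu(a)$, and the hypothesis that $L_{[nt]/n}\to\nu_t$ on the conditioning event. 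Chebyshev's inequality applied inside a small open ball $B_\delta(\omega_0)$ then yields
$$
\limsup_{n\to\infty}\frac{1}{n}\log\P_{f,n}\{M_X\in B_\delta(\omega_0)\mid\cdots\}\le-G_\nu(\omega_0;\tilde g,\tilde h)+\eps(\delta),
$$
with $\eps(\delta)\downarrow 0$ as $\delta\downarrow 0$.

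The passage from this local bound to the global variational bound follows the standard recipe: cover the compact set $F\cap K_\theta$ by finitely many balls $B_{\delta_i}(\omega_i)$; in each ball choose $(\tilde g,\tilde h)$ so that $G_\nu(\omega_i;\tilde g,\tilde h)$ approaches $\hat K_\nu(\omega_i)$; take the maximum over the finite cover; and let $\delta\downarrow 0$. This gives $\limsup_n n^{-1}\log\P_{f,n}\{M_X\in F\cap K_\theta\mid\cdots\}\le-\inf_{\omega\in F\cap K_\theta}\hat K_\nu(\omega)$, and sending $\theta\to\infty$ completes the argument.

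The decisive technical step is the interchange of $\sup_{(\tilde g,\tilde h)}$ and $\inf_\omega$ implicit in passing from the pointwise Chebyshev bound at a fixed test pair to a variational bound involving $\hat K_\nu$. Because the bracketed integrand in \eqref{ratePA7} is affine in $\omega$ (entering only through the marginals $\omega$, $\omega_2$ and $\omega_{2,1}$), the functional $\omega\mapsto\hat K_\nu(\omega)$ is convex and lower semicontinuous, and on the compact set $F\cap K_\theta$ a standard minimax lemma (of Sion--type, e.g.\ Dembo--Zeitouni, Lemma 4.5.28) legitimises the swap. Additional care is needed to verify uniform integrability of the Riemann sums $\sum_m U_{\tilde g_{m/n}}^{(n)}\otimes\nu$ used inside \eqref{EqPA4}, which reduces to the boundedness of $\tilde g\in\skric_2$ and the hypothesis \eqref{persistentEqPA1}.
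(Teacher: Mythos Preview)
Your proposal follows essentially the same route as the paper's proof: exponential tightness from Lemma~\ref{PA5}, the explicit Radon--Nikodym density \eqref{EqPA4} combined with Chebyshev to obtain local estimates on small balls, and a finite covering of the compact truncation $F\cap K_\theta$ with test pairs $(\tilde g,\tilde h)$ chosen separately at each centre $\omega_i$ so as to nearly attain $\hat K_\nu(\omega_i)$. The paper organises things slightly differently (it first records the moment bound \eqref{EqPA10} and works throughout with the truncation $\hat K_{\nu,\eps}=\min\{\hat K_\nu,\eps^{-1}\}-\eps$ to accommodate points where $\hat K_\nu=\infty$), but the substance is the same.

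Your final paragraph, however, introduces a spurious difficulty. There is no interchange of $\sup_{(\tilde g,\tilde h)}$ and $\inf_\omega$ in this argument, and no Sion-type minimax lemma is required. Once you have fixed, for each centre $\omega_i\in F$ of the finite cover, a pair $(\tilde g_i,\tilde h_i)$ with $G_\nu(\omega_i;\tilde g_i,\tilde h_i)\ge \hat K_{\nu,\eps}(\omega_i)$, the local Chebyshev bound and the finite cover give directly
\[
\limsup_{n\to\infty}\frac{1}{n}\log\P_{f,n}\{M_X\in F\cap K_\theta\mid\cdots\}
\le \max_i\bigl(-\hat K_{\nu,\eps}(\omega_i)+2\eps\bigr)
\le -\inf_{\omega\in F}\hat K_{\nu,\eps}(\omega)+2\eps,
\]
the last step holding simply because each $\omega_i\in F$. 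The supremum over test functions has already been taken pointwise at each $\omega_i$ before the infimum over $F$ is invoked, so no swap occurs. Your paragraphs~1--3 already contain the complete argument; the minimax discussion should be deleted, and you should insert the $\eps^{-1}$-truncation of $\hat K_\nu$ so that near-optimisers $(\tilde g_i,\tilde h_i)$ always exist.
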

\begin{Proof}
  We let $\tilde{h}\in\skric_1$,  $\tilde{g}\in\skric_2$ and use  the Jensen's inequality to obtain
$$\begin{aligned}
&e^{(\sup_{a_1}\tilde{h}(a)-\inf_{a_1}\tilde{h}(a_1))}\le\int
e^{\tilde{h}(X(n))-U(\tilde{h})}d\tilde{\P}_{f,n}\\
&=\me\Big\{e^{(n-1)\Big[\Big\langle
\tilde{h}-U(\tilde{h}),\,M_X\Big\rangle+\Big\langle\tilde{g}_{[nt]/n}-2\log
f_{[nt]/n}+\log c_t,\,M_{X}\otimes id\Big\rangle-\Big\langle
U_{\tilde{g}_{[nt]/n}}\otimes L,\,M_{X}\otimes id
\Big\rangle\Big]},(L_{[nt]/n}=\nu_{[nt]/n,\,\forall
t\in(0,1]})\Big\}.
\end{aligned}$$
This yields the inequality
\begin{equation}\label{EqPA10}
\limsup_{n\to\infty}\sfrac{1}{n}\log\me\Big\{e^{(n-1)\Big[\Big\langle
\tilde{h}-U(\tilde{h}),\,M_X\Big\rangle+\Big\langle\tilde{g}_{[nt]/n}-2\log
f_{[nt]/n}+\log c_t,\,M_{X}\otimes id\Big\rangle-\Big\langle
U_{\tilde{g}_{[nt]/n}}\otimes L,\,M_{X}\otimes id
\Big\rangle\Big]}\Big |(L_{[nt]/n}=\nu_{[nt]/n,\,\forall
t\in(0,1]})\Big\}=0.
\end{equation}
Given $\eps>0,$ define $\hat{K}_{\eps,\nu}$ by
$\hat{K}_{\nu,\eps}(\omega)=\min\big\{\hat{K}_{\nu}(\omega),{\eps}^{-1}\big\}-\eps.$
For $\omega\in F$ we fix $\tilde{h}\in\skric_1$ and
$\tilde{g}\in\skric_2$ such that
$$\langle
\tilde{h}-U(\tilde{h}),\,\omega_{2,1}\rangle+\langle\tilde{g}_t-2\log
f_t+\log c_t,\,\omega\otimes id \rangle-\langle
 U_{\tilde{g}_t}^{\nu},\,\omega\otimes id\rangle\ge\hat{K}_{\nu,\eps}(\omega).$$

Now, because the function $\tilde{g}_t$ is bounded, we can find open
neighbourhood $B_{\omega}$  of $\omega$, such that
\begin{equation}\label{EqPA11}
\inf_{\tilde{\omega}\in B_{\omega}}\Big\{ \langle
\tilde{h}-U(\tilde{h}),\,\omega_{2,1}\rangle+\langle\tilde{g}_t-2\log
f_t+\log c_t,\,\omega\otimes id \rangle-\langle
 U_{\tilde{g}_t}^{\nu},\,\omega\otimes id\rangle\,\Big\}
\ge\hat{K}_{\nu,\eps}(\omega)-\eps.
\end{equation}
Take  $\delta=\eps,$ apply the Chebyshev's inequality to
\eqref{EqPA11} and use \eqref{EqPA10}  to  get
\begin{equation}\label{EqPA12}
\begin{aligned}
&\limsup_{n\to\infty}\sfrac{1}{n}\log\prob_{f,n}\Big\{M_X\in B_{\omega}\Big |(L_{[nt]/n}=\nu_{[nt]/n,\,\forall t\in(0,1]})\Big\}\\
&\leq\limsup\sfrac{1}{n}\log\me\Big\{e^{(n-1)\Big[\Big\langle
\tilde{h}-U(\tilde{h}),\,M_X\Big\rangle+\Big\langle\tilde{g}_{\cdot/n}-2\log
f_{\cdot/n}+\log c_t,\,M_{X}\otimes id \Big\rangle-\Big\langle
U_{\tilde{g}_{\cdot/n}}\otimes
L,\,M_{X}\otimes id \Big\rangle\Big]}\Big |(L_{[nt]/n}=\nu_{[nt]/n,\,\forall t\in(0,1]})\Big\}\\
&\qquad\qquad-\hat{K}_{\nu,\eps}(\omega)+\eps\\
&\leq-\hat{K}_{\nu,\eps}(\omega)+2\eps
\end{aligned}
\end{equation}
Using Lemma~\ref{PA5} with $\theta=\eps^{-1} $ we may choose the
compact set $G_\eps$ such that
$$
\limsup_{n\to\infty}\sfrac{1}{n}\log\prob_{f,n}\Big\{ M_{X}\not\in
G_\eps\Big |(L_{[nt]/n}=\nu_{[nt]/n,\,\forall t\in(0,1]})\Big\} \leq
- \eps^{-1}. $$ Now, the set $F\cap G_{\eps}$ is compact and
therefore we may be covered by finitely many sets
$B_{\omega_1},\,\ldots,\,B_{\omega_r}$, with $\omega_i\in F$ , for
$i=1,\,\ldots,\,r.$ Hence, we have that

$$\begin{aligned}\prob_{f,n}\Big\{
M_X\in F \Big |L=\nu\Big\}&\leq \sum_{i=1}^{r}\prob\Big\{ M_X\in
B_{\omega_i}\Big |(L_{[nt]/n}=\nu_{[nt]/n,\,\forall
t\in(0,1]})\Big\}\\
&\qquad\qquad+\prob\Big\{ M_X\not\in G_\eps \Big
|(L_{[nt]/n}=\nu_{[nt]/n,\,\forall t\in(0,1]})\Big\}.\end{aligned}
$$

Next we use \eqref{EqPA12} we obtain for small enough $\eps>0,$
$$\begin{aligned}
\limsup_{n\to\infty}&\sfrac{1}{n}\log\prob_{f,n}\Big\{ M_X\in F\Big
|(L_{[nt]/n}=\nu_{[nt]/n,\,\forall
t\in(0,1]})\Big\}\\
&\leq
\max_{i=1}^{r}\limsup_{n\to\infty}\sfrac{1}{n}\log\prob_{f,n}\Big\{
M_X\in B_{\omega_i}\Big |(L_{[nt]/n}=\nu_{[nt]/n,\,\forall
t\in(0,1]})\Big\}-\eps^{-1}\le-\hat{K}_{\nu,\eps}(\omega)+2\eps
\end{aligned}$$

Taking $\eps\downarrow 0$ we get the desire statement.
\end{Proof}

We show that the function $\hat{K}_{\nu}(\omega)$ in Lemma~\ref{PA8}
may be replaced by the good rate function
$$K_{\nu}(\omega)=H\Big(\omega_{2,1}\,\|\,\mu\Big)
+\sum_{a\in\skrix}\omega_{2}(a)\int_{[0,1]}H\Big(\omega(\cdot|a)\,\|\,\frac{c_t}{f_t(\cdot,\,a)}\otimes\nu_t(\cdot|a)\Big)dt.$$

\begin{lemma}\label{PA13}
 For every
 $\nu\in\skrid_{\skrim}$ we have
 that $ \hat{K}_{\nu}(\omega)\ge K_{\nu}(\omega).$ Moveover, the
  function $K_{\nu}$ is good rate  function and lower semi-continuous on
  $\skrim(\skrin\times\skrix).$
 \end{lemma}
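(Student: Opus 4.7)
For the inequality $\hat{K}_\nu(\omega)\ge K_\nu(\omega)$, my strategy is to identify the two summands of $K_\nu$ as Donsker--Varadhan dual representations of relative entropy and to produce them by inserting suitable test functions into the supremum defining $\hat{K}_\nu$. Since $\tilde h\in\skric_1$ enters only via $\int(\tilde h-U(\tilde h))\,d\omega_{2,1}$ while $\tilde g_t\in\skric_2$ controls the remaining pieces, the two suprema decouple. The $\tilde h$-supremum is immediate: because $\tilde\mu$ is a probability law, $U(\tilde h)=\log\int e^{\tilde h}\,d\mu$, so Donsker--Varadhan gives $\sup_{\tilde h}\{\int\tilde h\,d\omega_{2,1}-U(\tilde h)\}=H(\omega_{2,1}\,\|\,\mu)$, which is the first summand of $K_\nu$.

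For the $\tilde g_t$-supremum I would use the reparametrisation $\tilde g_t(k,a)=\log f_t(k,a)+\log c_t+\psi_t(k,a)$, extending $\skric_2$ by a truncation/approximation argument in $k$ since $\log f_t$ is unbounded. Under this substitution $U_{\tilde g_t}\otimes\nu(a,t)$ collapses to $\log c_t+\log\langle e^{\psi_t(\cdot,a)},\nu_t(\cdot|a)\rangle$, and the $\log c_t$ and $\log f_t$ pieces of \eqref{ratePA7} combine cleanly, reducing the integrand (up to a $\log(c_t/f_t)$-residual to be absorbed later) to $\int\psi_t\,d\omega-\sum_a\omega_2(a)\log\langle e^{\psi_t(\cdot,a)},\nu_t(\cdot|a)\rangle$. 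Slice-wise Donsker--Varadhan against the probability measure $\nu_t(\cdot|a)$ turns the supremum into $\sum_a\omega_2(a)H(\omega(\cdot|a)\,\|\,\nu_t(\cdot|a))$, and the change-of-reference identity $H(p\,\|\,\nu)=H(p\,\|\,(c/f)\otimes\nu)+\int\log(c/f)\,dp$ both converts this to $\sum_a\omega_2(a)H(\omega(\cdot|a)\,\|\,(c_t/f_t)\otimes\nu_t(\cdot|a))$ and absorbs the residual. Integrating in $t\in[0,1]$ yields the second summand of $K_\nu$, proving $\hat{K}_\nu(\omega)\ge K_\nu(\omega)$.

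Lower semi-continuity of $K_\nu$ follows from l.s.c.\ of the relative entropy in its first argument in the total-variation topology, continuity of the marginal $\omega\mapsto\omega_{2,1}$, and Fatou's lemma, which transfers l.s.c.\ through the $t$-integral and the finite sum over $\skrix^*$. For goodness, on the level set $\{K_\nu\le M\}$ the bound $H(\omega_{2,1}\,\|\,\mu)\le M$ keeps $\omega_{2,1}$ in a compact subset of $\skrim(\skrix)$ because $\skrix$ is finite; for each $a$ with $\omega_2(a)>0$, the bound $\int_0^1 H(\omega(\cdot|a)\,\|\,(c_t/f_t)\otimes\nu_t(\cdot|a))\,dt\le M/\omega_2(a)$ combined with hypothesis \eqref{persistentEqPA1} supplies a uniform (in $t$) super-linear tail estimate on $\omega(\cdot|a)$, yielding tightness on $\skrin$ via de la Vall\'ee--Poussin. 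Closedness from l.s.c.\ then gives compactness of the sub-level sets.

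The principal technical obstacle will be the second variational computation: because $(c_t/f_t)\otimes\nu_t(\cdot|a)$ is generally not a probability measure, the supremum does not match a Donsker--Varadhan formula directly, and the $\log(c_t/f_t)$-residual from the chain rule must be tracked exactly against the $-2\log\tilde f_t+\log c_t$ piece appearing in \eqref{ratePA7}; hypothesis \eqref{persistentEqPA1} is what guarantees these residual corrections are integrable in $t$ and that the resulting lower bound is indeed $K_\nu$.
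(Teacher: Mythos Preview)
Your key absorption step contains a sign error. With your reparametrisation $\tilde g_t=\log f_t+\log c_t+\psi_t$, the $\tilde g$-part of \eqref{ratePA7} indeed reduces to
\[
\int\log\frac{c_t}{f_t}\,d\omega\;+\;\Big[\int\psi_t\,d\omega-\sum_{a}\omega_2(a)\log\big\langle e^{\psi_t(\cdot,a)},\nu_t(\cdot|a)\big\rangle\Big],
\]
so the residual is $+\int\log(c_t/f_t)\,d\omega$. But your own change-of-reference identity $H(p\,\|\,\nu)=H(p\,\|\,(c/f)\otimes\nu)+\int\log(c/f)\,dp$ produces a \emph{further} $+\int\log(c/f)\,dp$ with the same sign; substituting,
\[
\text{residual}+\sum_a\omega_2(a)H\big(\omega(\cdot|a)\,\|\,\nu_t(\cdot|a)\big)
=\sum_a\omega_2(a)H\big(\omega(\cdot|a)\,\|\,(c_t/f_t)\otimes\nu_t(\cdot|a)\big)+2\int\log\frac{c_t}{f_t}\,d\omega.
\]
Since $\log\big(c_t/f_t(k,a)\big)<0$ for every $k\ge 2$, the leftover $2\int_0^1\!\int\log(c_t/f_t)\,d\omega\,dt$ can be arbitrarily negative, and $\hat K_\nu\ge K_\nu$ does not follow. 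The residual doubles rather than cancels, so the heart of your argument fails.

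The paper avoids this by running the comparison in the opposite direction. It writes the second summand of $K_\nu$ directly as a Donsker--Varadhan supremum with the (non-probability) reference $(c_t/f_t)\otimes\omega_2\otimes\nu_t$, applies Jensen to replace $-\log\int(\cdot)\,\omega_2(da)$ by $-\int\log(\cdot)\,\omega_2(da)$, and then uses the structural assumption \eqref{equ.one} together with the standing hypothesis $\omega=\nu_1$ to identify $c_t=\langle f_t,\omega\rangle$; this lets $-\log c_t$ be rewritten as $\log c_t-2\log\langle f_t,\omega\rangle$, after which a second Jensen step $-\log\langle f_t,\omega\rangle\le-\langle\log f_t,\omega\rangle$ reproduces exactly the $-2\int\log f_t\,d\omega+\log c_t$ shape in $\hat K_\nu$. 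The ingredient your reparametrisation route is missing is precisely this use of \eqref{equ.one} and $\omega=\nu_1$ to pin down the normalising constant; without it the $\log(c_t/f_t)$ bookkeeping cannot be made to balance. Your discussion of lower semi-continuity and goodness is sound and in fact more detailed than the paper's.
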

 \begin{Proof}Suppose $\nu_1=\omega$.Then, using the Jensen's inequality,
 by  our assumption  \eqref{equ.one} and  the variational characterization of entropy we  have
 $$\begin{aligned}
H\Big(\omega_{2,1}\,\|\,\mu\Big)
=\sup_{\tilde{h}}\Big\{\int\tilde{h}(a_1)\omega_{2,1}(da_1)-\log\int
e^{\tilde{h}(a_1)}\mu(da_{1})\Big\}
\end{aligned}$$

$$\begin{aligned}
&\sum_{a\in\skrix}\omega_{2}(a)\int_{[0,1]}H\Big(\omega(\cdot|a)\,\|\,\frac{c_t}{f_t(\cdot,,a)}\otimes\nu_t(\cdot|a)\Big)dt\\
&=\int_{[0,1]}\sup_{\tilde{g}_t}\Big\{\int
\tilde{g}_t(k,a)\omega(dk,da)-\log\int\int c_t
\sfrac{e^{\tilde{g}(k,\,a)}}{f_t(k,\,a)}\omega_2(da)\nu_{t}(dk|a)
\, \Big\}dt\\
&\le\int_{[0,1]}\sup_{\tilde{g}}\Big\{\int
\tilde{g}_t(k,a)\omega(dk,da)+\log c_t-2\log c_t-\int\log\Big(\int
\sfrac{e^{\tilde{g}_t(k,\,a)}}{f_t(k,\,a)}\nu_{t}(dk|a)\Big)\omega_2(da)
\, \Big\}dt\\
&=\int_{[0,1]}\sup_{\tilde{g}_t}\Big\{\int
\tilde{g}(k,a)\omega(dk,da)+\log c_t-2\log \int
f_t(k,a)\omega(dk,da)-\int\log\Big(\int
\sfrac{e^{\tilde{g}_t(k,\,a)}}{f_t(k,\,a)}\nu_{t}(dk|a)\Big)\omega_2(da)
\, \Big\}dt\\
&\le\int_{[0,1]}\sup_{\tilde{g}}\Big\{\int
\tilde{g}_t(k,a)\omega(dk,da)+\log c_t-2\int\log
f_t(k,a)\omega(dk,da)-\int\log\Big\langle
\sfrac{e^{\tilde{g}_t(\cdot|a)}}{f_t(\cdot|a)},\,\nu_{t}(\cdot|a)\Big\rangle\omega_2(da)
\, \Big\}dt\\
&=\int_{[0,1]}\sup_{\tilde{g}_t}\Big\{\int
\tilde{g}_t(k,a)\omega(dk,da)+\log c_t-2\int\log
f_t(k,a)\omega(dk,da)-\int
U_{\tilde{g}_t}(a)\omega_2(da)\Big\}dt\\
&=\hat{K}_{\nu}(\omega)
\end{aligned}$$

Recall  the definition of $K_{\nu}$   above and  notice, mapping
$\omega\to K_{\nu}(\omega)$ is  continuous function. Moreover, for
all $\alpha<\infty$, the level sets $\{K_{\nu}\le \alpha\}$ are
contained in the bounded set
$$\Big\{\omega\in\skrim(\skrin\times\skrix)\colon
\,\sum_{a\in\skrix}\omega_{2}(a)\int_{[0,1]}H\Big(\omega(\cdot|a)\,\|\,\frac{c_t}{f(\cdot,\,a)}
\otimes\nu_t^{\nu}(\cdot|a)\Big)dt\le\alpha\Big\}$$ and are
therefore compact. Consequently, $K_{\nu}$ is a good rate function.

\end{Proof}

 \subsection{Lower bound}. We establish the lower bound by using the upper
bound. To begin. we let $O$ be open subset of
$\skrim(\skrin\times\skrix)$.

 \begin{lemma}\label{PA14}
 \begin{equation}\label{EqPA15}
\liminf_{n\to\infty}\sfrac{1}{n}\log\prob_{f,n}\Big\{ M_X\in O \Big
|(L_{[nt]/n}=\nu_{[nt]/n,\,\forall t\in(0,1]})\Big\} \geq
-\inf_{\omega\in O }\hat{K}_{\nu}(\omega)
\end{equation}
\end{lemma}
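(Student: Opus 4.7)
The plan is to combine the exponential change of measure \eqref{EqPA4} with the upper bound of Lemma~\ref{PA8} applied to the tilted law, in the Laplace-method spirit suggested by the remark that the lower bound is obtained from the upper bound.

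Fix $\omega\in O$ with $\hat{K}_\nu(\omega)<\infty$ (otherwise the claimed inequality is trivial). For arbitrary $\eps>0$, select $\tilde{h}\in\skric_1$ and $\tilde{g}\in\skric_2$ that realise the supremum in \eqref{ratePA7} up to $\eps$, so that
$$\langle \tilde{h}-U(\tilde{h}),\omega_{2,1}\rangle + \int_{[0,1]}\Big[\langle \tilde{g}_t-2\log f_t+\log c_t,\omega\rangle - \langle U_{\tilde{g}_t}\otimes\nu(\cdot,t),\omega_2\rangle\Big]\,dt \;\ge\; \hat{K}_\nu(\omega)-\eps.$$
As constructed in the exponential-change-of-measure subsection above, this pair $(\tilde{h},\tilde{g})$ defines a tilted law $\tilde{\prob}_{f,n}$ on the space of fitness P.A.\ networks; by duality, its associated rate function (obtained from $K_\nu$ by replacing $(\mu,f)$ by $(\tilde{\mu},\tilde{f})$) attains its minimum value $0$ at $\omega$.

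Pick $\delta>0$ so small that the open ball $B_\delta(\omega)\subset O$. Using \eqref{EqPA4} to invert the Radon--Nikodym derivative we write
$$\prob_{f,n}\{M_X\in B_\delta(\omega)\mid L\} = \me_{\tilde{f},n}\!\Big[\,\sfrac{d\prob_{f,n}}{d\tilde{\prob}_{f,n}}\,\1_{\{M_X\in B_\delta(\omega)\}}\,\Big|\,L\,\Big].$$
Continuity of the integrand in \eqref{ratePA7} in its $\omega$-variable, combined with the uniform convergence $L_{[nt]/n}\to\nu$, implies that on the event $\{M_X\in B_\delta(\omega)\}$ the Radon--Nikodym derivative is bounded below by $\exp\!\big(-(n-1)(\hat{K}_\nu(\omega)+\eps)\big)$ once $\delta$ is small enough and $n$ is large enough.

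Finally, I would apply Lemma~\ref{PA8} to the tilted law $\tilde{\prob}_{f,n}$ with the closed set $B_\delta(\omega)^c$: since the tilted rate function has $\omega$ as its unique zero, the conditional probability $\tilde{\prob}_{f,n}\{M_X\in B_\delta(\omega)\mid L\}$ tends to $1$. Substituting into the previous display and taking $\liminf_n n^{-1}\log$ yields
$$\liminf_{n\to\infty}\sfrac{1}{n}\log\prob_{f,n}\{M_X\in O\mid L\}\;\ge\; -\hat{K}_\nu(\omega)-\eps;$$
letting $\eps\downarrow 0$ and then taking the infimum over $\omega\in O$ gives \eqref{EqPA15}.

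The main obstacle is verifying the claim that the tilted law's rate function vanishes at $\omega$, i.e.\ that the near-optimal pair $(\tilde{h},\tilde{g})$ corresponds to the Euler--Lagrange equations of the variational problem \eqref{ratePA7}. Because the supremum sits inside the $dt$-integral, one must justify interchanging the integration and the variational supremum; this is precisely where the method of mixtures of (Biggins, 2004), invoked in the introduction, enters. A secondary difficulty is that $(\tilde{h},\tilde{g})$ can in principle be unbounded (since the rate function involves entropies), so some approximation by bounded tilts may be required before the change of measure is applied rigorously.
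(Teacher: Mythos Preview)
Your overall architecture---tilt the law via \eqref{EqPA4}, bound the Radon--Nikodym derivative on a small ball around $\omega$, and then use the upper bound (Lemma~\ref{PA8}) on the tilted law to show that $\tilde{\prob}_{f,n}\{M_X\in B_\delta(\omega)\}$ does not decay exponentially---is exactly the paper's strategy. The difference lies in how the tilt is chosen, and this is where your argument has a genuine gap.

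You pick $(\tilde h,\tilde g)$ to be an $\eps$-near optimiser of the supremum in \eqref{ratePA7} and then assert, ``by duality, its associated rate function \ldots\ attains its minimum value $0$ at $\omega$''. That implication fails: an $\eps$-optimal tilt need not centre the tilted law anywhere near $\omega$. The tilted rate function could well have its zero at some other point, in which case $\tilde{\prob}_{f,n}\{M_X\in B_\delta(\omega)\}$ decays exponentially and the last step collapses. The paper avoids this by writing down the \emph{exact} Euler--Lagrange tilt in closed form,
\[
\tilde g_{t,\omega}(k,a)=\log\frac{f_t(k,a)\,\omega(k\,|\,a)}{c_t\,\nu_t(k\,|\,a)}\qquad(\nu_t(k\,|\,a)>0),
\]
rather than selecting a near-optimiser. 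With this explicit choice the tilted weight function becomes $\tilde f_t(k,a)=\omega(k\,|\,a)/\nu_t(k\,|\,a)$ (with $\tilde\gamma+\tilde\beta=1$), and one can check directly that the tilted rate function $\tilde K_\nu(\tilde\omega)$ vanishes if and only if $\tilde\omega=\omega$; the contradiction argument on $(B_\omega)^c$ then goes through. In other words, you should \emph{construct} the tilt from the target $\omega$, not from the variational formula.

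Two smaller remarks. First, your appeal to Biggins' mixing result is misplaced here: in the paper that machinery is used later, to pass from the conditional LDP to the joint LDP of Theorem~\ref{PA30}, not to interchange supremum and integral inside the proof of the lower bound. Second, your ``secondary difficulty'' about unboundedness of the tilt is real (indeed $\tilde g_{t,\omega}$ above is unbounded where $\nu_t$ or $\omega$ vanish), and the paper does not treat it carefully either; a rigorous version would approximate $\omega$ by measures with full support before tilting.
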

\begin{Proof}Suppose $ \omega=\nu_1.$ We define
 the function $\tilde{g}_{t,\omega}:\skrix\to\R$ by
$$\begin{aligned}
\tilde{g}_{t,\omega}(k,a)=\left\{\begin{array}{ll}\log\frac{{f}_t(k,a)\omega(k|a)}{c_t\nu_{t}(k|a)}
& \mbox {if $\nu_{t}(k|a)>0,$ }\\
0 & \mbox{otherwise,}
\end{array}\right.
\end{aligned}$$

Let $B_{\omega}$  be open neighbourhood of $\omega$ such that for
all $\tilde{\omega},\in B_{\omega}$ we have that
$$\langle\tilde{g}_{t,\omega}-2\log{f}_t,\,\tilde{\omega}\rangle-\langle U_{\tilde{g}_{t,\omega}}\otimes{\nu},\,\tilde{\omega}\otimes
dt\rangle\ge\langle\tilde{g}_{t,\omega}-2\log{f}_t,,\,\omega\rangle-\langle
U_{\tilde{g}_{t,\omega}}\otimes{\nu},\,\omega\otimes
dt\rangle-2\eps.$$

We use ${\tilde\P}_{f,n}$ the law of the coloured preferential
attachment graph obtained by transforming ${\P}_{f,n}$ using
$\tilde{g}_{t,\omega}.$ We observe that colour law in the
transformed measure is $\omega_{2,1}$ and the linear weight function
is
$$\tilde{f}_t(k,a)=\sfrac{\omega(k|a)}{\nu_t(k|a)},$$ where
$$\tilde{\gamma}(t,a)=\frac{|\sum_{k=0}^{\infty}k\omega(k|a)-1|}{\sum_{k=0}^{\infty}k^2\nu_1(k|a)-1}$$
$$\tilde{\beta}(t,a)=\frac{(\sum_{k=0}^{\infty}k^2\nu_t(k|a)-1)-|\sum_{k=0}^{\infty}k\omega(k|a)-1|}
{\sum_{k=0}^{\infty}k^2\nu_t(k|a)-1}$$  and  that  therefore
$\tilde{\gamma}(t,a)+\tilde{\beta}(t,a)=1.$  We use \eqref{EqPA4} to
obtain
\begin{equation}
\begin{aligned}
&\prob_{f,n}\Big\{ M_X\in O ,\,(L_{[nt]/n}=\nu_{[nt]/n,\,\forall
t\in(0,1]})\Big\}\\
&\ge\tilde{\me}\Big\{\sfrac{{\tilde\P}_{f,n}}{{\P}_{f,n}^{\nu}}(X)\1_{\big\{{M_{X}\in
B_{\omega}\big\}}},\,(L_{[nt]/n}=\nu_{[nt]/n,\,\forall
t\in(0,1]})\Big\}\\
&=\tilde{\me}\Big\{\exp\Big(-(n-1)\langle\tilde{g}_{t,\omega}+\log
c_t- U_{\tilde{g}_{t,\omega}},\,M_{X}\otimes dt\rangle -(n-1)\langle
\log \sfrac{1}{f_t^2},\,M_{X}\otimes dt\rangle\Big)\times
\1_{\big\{{M_{X}\in
B_{\omega}\big\}}}\Big\}\\
&\ge\exp\Big(-(n-1)\langle\tilde{g}_{t,\omega}+\log c_t-
U_{\tilde{g}_{t,\omega}},\,\omega\otimes
dt\rangle+\eps\Big)\times\tilde{\P}_{f,n}\Big\{M_{X}\in
B_{\omega},\,(L_{[nt]/n}=\nu_{[nt]/n,\,\forall t\in(0,1]})\Big\}\\
&\ge\exp\Big(-(n-1)(\langle\tilde{g}_{t,\omega},\,\omega\otimes
dt\rangle-2\langle \log (\sfrac{c_t}{f_t}),\,M_{X}\otimes
dt\rangle)+\eps)\Big)\times\tilde{\P}_{f,n}\Big\{M_{X}\in
B_{\omega},\,(L_{[nt]/n}=\nu_{[nt]/n,\,\forall t\in(0,1]})\Big\}\\
&\ge\exp\Big(-(n-1)(\langle\tilde{g}_{t,\omega},\,\omega\otimes
dt\rangle +\eps)-2\int_{0}^{1}\log (
1+\beta_t/\gamma_t)dt\Big)\times\tilde{\P}_{f,n}\Big\{M_{X}\in
B_{\omega},\,(L_{[nt]/n}=\nu_{[nt]/n,\,\forall t\in(0,1]})\Big\},
\end{aligned}
\end{equation}
where  we  have  used  $c_t>1$  in  the  last  inequality.

Therefore we have that
\begin{equation}\label{EqPA16}
\begin{aligned}
\liminf_{n\to\infty}\sfrac{1}{n}\log\prob_{f,n}\Big\{ M_X\in O
\Big|&\,(L_{[nt]/n}=\nu_{[nt]/n,\,\forall t\in(0,1]})\Big\}\ge
-\langle\tilde{g}_{t,\omega},\,\omega\otimes dt\rangle+3\eps\\
&+\liminf_{n\to\infty}\sfrac{1}{n}\log\tilde\tilde{\prob}_{f,n}\Big\{
M_X\in O \Big|\,(L_{[nt]/n}=\nu_{[nt]/n,\,\forall t\in(0,1]})\Big\},
\end{aligned}
\end{equation}
where  we  have  used  \eqref{persistentEqPA1}  in  the  last
inequality.

We complete the proof of the lower bound by showing that the last
term in \eqref{EqPA16} above  vanishes. We shall use the upper bound
with the measure $\P_{f,n}$ replaced by $\tilde{\P}_{f,n}.$ Thus, by
Lemma~\ref{PA8} we have that
$$\limsup_{n\to\infty}\sfrac{1}{n}\log\tilde{\P}_{f,n}\Big\{M_{X}\in
(B_{\omega})^{c}\Big\}\leq
-\inf_{\tilde{\omega}\in(B_{\omega})^{c}}\tilde{K}_{\nu}(\tilde{\omega}),$$
$$\begin{aligned}\nonumber
\tilde{K}_{\nu}(\tilde{\omega})=\left\{
\begin{array}{ll}H(\tilde{\omega}_{2,1}\,\|\,\nu_{1,2}))
+\sum_{a\in\skrix}\tilde{\omega}_{2}(a)\int_{[0,1]}H(\tilde{\omega}(\cdot|a)\,\|\,\,\frac{1}{\tilde{f}_t}\otimes\nu_t(\cdot|a))dt&
\mbox {if $\tilde{\omega}=\nu_1,$}\\
\infty & \mbox{otherwise,}
\end{array}\right.
\end{aligned}$$
 where $A^c$ denotes complement of the set $A.$ It therefore suffice to show
 that the infimum above is positive. Suppose for contradiction
 that there exits sequence $\tilde{\omega}_n\in (B_{\omega})^{c}$ with $\tilde{K}_{\nu}(\tilde{\omega}_n)\downarrow
 0.$ Then, because the mapping
 ${\tome}\mapsto\tilde{K}_{\nu}({\tome})$
 is lower semi-continuous, we can construct a limit point $\tome\in(B_{\omega})^{c}$ with $\tilde{K}_{\nu}(\tome)=0.$ This implies that
$\tome_{2}={\nu}_{1}=\omega_{2}$ and
$\sum_{a\in\skrix}\tilde{\omega}_{2}(a)\int_{(0,1]}H(\tome(\cdot|a)\,\|\,\frac{1}{\tilde{f}_t}\otimes\nu_t(\cdot|a))dt=0.$
Hence ${\tome(k|a)}\omega(k|a) =\nu_t(k|a)\nu_t(k|a),$ for all
$k\in\skrin,$ and $t\in(0,1)$ which yields $\tilde{\omega}=\omega.$
This contradicts $\tome\in(B_{\omega})^{c}.$
\end{Proof}

 \subsection{Proof  of  Theorem \ref{PA3} By  Mixing } To  use  the
technique of  mixing  LDP  results developed in (Biggins, 2004), we
check  the  main criteria needed for the validity of (See, Biggins,
2004, Theorem~5(a))  in  the  following   Lemma. We write
$\Theta_n:=\skrid_{\skrim_{n}(\skrin\times\skrix)},$
$\Theta:=\skrid_{\skrim(\skrin\times\skrix)},$  and  define
 $$P_{f,n}(\nu_1):=\P\Big[M_X=\nu_1\,\big|\,L_{\sfrac{[nt]}{n}}^X(\cdot,a)=\nu_{\sfrac{[nt]}{n}}(\cdot,a),\,
 t\in[0,1) \mbox{ and } a\in\skrix \Big]$$
 $$P_{n}\Big(\nu_{\sfrac{[nt]}{n}},\,t\in[0,1)\Big):=\P\Big\{L_{\sfrac{[nt]}{n}}^X=\nu_{\sfrac{[nt]}{n}}\, \Big\}$$

Then,  the  joint  distribution  of $M_X$  and  $L^X$ is obtained by
the  mixture  of $P_{f,n}$ and $P_{n}$ as follows:
$$ d\tilde{P}_{f,n}(\nu,\,\nu_1):=
dP_{n}(\nu)dP_{f,n}(\nu_1).$$

\begin{lemma}\label{Mix}\,\,
  The family  of distributions  (i) $(P_{f,n},\,n\in\N)$ (ii)
$(\tilde{P}_{f,n},\,n\in\N)$ are exponentially  tight.

\end{lemma}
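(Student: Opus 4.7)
Part (i) follows immediately from Lemma~\ref{PA5}. For any $\theta>0$, that lemma furnishes a compact set $K_{\theta}\subset\skrim(\skrin\times\skrix^{*})$ with
$$\limsup_{n\to\infty}\sfrac{1}{n}\log\prob_{f,n}\big\{M_X\notin K_{\theta}\,\big|\,L_{[nt]/n}=\nu_{[nt]/n},\,\forall t\in(0,1]\big\}\le -\theta,$$
and the bound is uniform in the conditioning path $\nu$. Since the right-hand side does not depend on $\nu$, I integrate this estimate against $P_{n}$ to obtain $\limsup_{n} n^{-1}\log P_{f,n}(K_{\theta}^{c})\le-\theta$, which is the required exponential tightness of $(P_{f,n})_{n\in\N}$.

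For part (ii) I exploit the standard fact that a probability measure on a product space is exponentially tight if and only if each marginal family is exponentially tight. The $M_X$-marginal of $\tilde{P}_{f,n}$ is the unconditional law of $M_X$ under $\prob_{f,n}$, and is exponentially tight by part (i) (the uniform-in-$\nu$ bound integrates cleanly). It therefore suffices to prove that the path marginal $P_{n}$, a law on $\Theta=\skrid_{\skrim(\skrin\times\skrix)}$, is exponentially tight. I will construct compact sets $\mathcal{G}_{\theta}\subset\Theta$ by combining a uniform-in-$t$ degree tail estimate with an equicontinuity condition: for each $l\ge 1$ pick $k(l,\delta)$ as in the proof of Lemma~\ref{PA5} and apply the same Chebyshev--exponential argument to control $\prob\{L^{X}_{[nt]/n}(\{k>k(l,\delta)\},\skrix^{*})>l^{-1}\}$, but now uniformly in $t$ by a union bound over the at most $n$ jump times $t=m/n$ (a polynomial factor that is absorbed in the exponential rate); independently, the deterministic total-variation bound $\|L_{(m+1)/n}^{X}-L_{m/n}^{X}\|\le C/m$, which follows because only one datum is added between consecutive steps and the normalisation shifts by $O(1/m)$, gives uniform equicontinuity of sample paths as $n\to\infty$. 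The intersection is then relatively compact in $\Theta$ under the uniform topology by Prokhorov together with an Arzel\`a--Ascoli-type compactness criterion on cadlag path space.

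The main obstacle is the uniform-in-$t$ version of the Lemma~\ref{PA5} tail estimate. It is handled by noting that the in-degrees $N^{(m)}(j)$ are monotone non-decreasing in $m$, so for each fixed cutoff $K$ controlling $\sup_{t}L_{[nt]/n}^{X}(\{k>K\},\skrix^{*})$ reduces, modulo the time-dependent normalisation $1/(m-1)$, to the tail bound at $t=1$, which is exactly the content of Lemma~\ref{PA5}. Once this reduction is in place the two conditions assemble into the desired compact $\mathcal{G}_{\theta}$, the bound $P_{n}(\mathcal{G}_{\theta}^{c})\le e^{-n\theta}$ holds for large $n$, and the exponential tightness of $(\tilde{P}_{f,n})$ follows from the product structure $d\tilde{P}_{f,n}=dP_n\,dP_{f,n}$.
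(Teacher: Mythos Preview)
Your overall product-of-compacts strategy for (ii) matches the paper's, but the two proofs diverge in how the building blocks are obtained.

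For (i), the paper does \emph{not} invoke Lemma~\ref{PA5} directly. Instead it uses the upper bound already proved in Lemmas~\ref{PA8}--\ref{PA13} together with the general fact (Dembo--Zeitouni, Exercise~4.1.10(c)) that a family satisfying an LDP upper bound with a \emph{good} rate function is automatically exponentially tight. Your route via Lemma~\ref{PA5} is more direct and equally valid, but note that the integration step against $P_n$ is superfluous: by definition $P_{f,n}$ \emph{is} the conditional law $\prob_{f,n}\{M_X\in\cdot\,|\,L_{[nt]/n}=\nu_{[nt]/n}\}$, so Lemma~\ref{PA5} already reads $\limsup_n n^{-1}\log P_{f,n}(K_\theta^c)\le-\theta$ without any averaging.

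For (ii), the paper takes the same product set $\Gamma_\theta=K_{\theta_1}\times K_{\theta_2}$ and the same union bound $\tilde{P}_{f,n}(\Gamma_\theta^c)\le\P\{M_X\in K_{\theta_1}^c\}+\P\{L^X\in K_{\theta_2}^c\}$, but it does \emph{not} build a path-space compact by Arzel\`a--Ascoli as you do. The paper relies instead (implicitly, and with some typographical confusion between the two factors) on its earlier observation that $L_{[nt]/n}^X$ is deterministic, i.e.\ $P_n$ is a point mass on a single path; tightness of a Dirac mass is trivial. Your tail-plus-equicontinuity construction is considerably more work than the paper needs, but it has the virtue of being self-contained and of not depending on that determinism claim, so it would survive in settings where $L^X$ is genuinely random.
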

\begin{proof}

(i)  As  this  family distributions  obey  a  large  deviation upper
bound  with  a  good  rate  function  $K_{\nu}(\omega),$ the family
$(P_{f,n},\,n\in\N)$ is exponentially tight. See, e.g. (Dembo  and
Zeitouni, 1998, Exercise~4.1.10(c)).

(ii) By  (i)  for  every  $\theta_2$  we  can  find  $K_{\theta_2},$
compact subset of $\skrid_{\skrim(\skrin\times\skrix)}$  such that,
we have
$$\limsup_{n\to \infty}\frac{1}{n}\log
P_{f,n}(K_{\theta_2}^c)\le -\theta_2.$$  Also  by Lemma~\ref{PA5},
for every $\theta_1$  we can  find $K_{\theta_1},$ compact subset of
$\skrim(\skrin\times\skrix)$ such that,  we have
$$\limsup_{n\to \infty}\frac{1}{n}\log P_{f,n}(K_{\theta_1}^c)\le
-\theta_1.$$

Take  $\theta=\min(\theta_1,\theta_2)$ and define the relatively
compact set $\Gamma_\theta$  by

$$\Gamma_{\theta}:=\Big\{(\nu_1,\nu)\in\skrim(\skrin\times\skrix)\times\skrid_{\skrim(\skrin\times\skrix)}:\,\nu_1\in
K_{\theta_1} \mbox{ and }\nu\in K_{\theta_2}\Big\}.$$  Now,  let
$\delta>0$    and notice  that,  for  sufficiently  large $n$  we
have that
$$\tilde{P}_{f,n}(\Gamma_{\theta}^c)\le \P\big\{ M_X\in K_{\theta_1}^c \big\}+ \P\big\{ L^X\in K_{\theta_2}^c
\big\}\le C(\theta)e^{-n(\theta-\delta)}.$$

Taking  limit  $n\to\infty$ followed  by   $\delta\downarrow 0$  of
above  inequality,  yields
$$\limsup_{n\to\infty}\frac{1}{n}\log\tilde{P}_{f,n}(\Gamma_{\theta}^c)\le
-\theta$$ which  proves the second  part  of  the  Lemma.
\end{proof}

Now, as  $J(\nu_1, \nu)$  is  lower  semi-continuous  by  the
continuity of the  relative  entropies,  and  by  Lemma~\ref{Mix}
the  families of distributions (i) $(P_{f,n},\,n\in\N)$ (ii)
$(\tilde{P}_{f,n},\,n\in\N)$  are  exponentially  tight,  we have
that the latter obeys  a  large  deviation principle with  good rate
function give  by  $J(\nu_1).$  (See, Biggins, 2004, Theorem~5(a)).

\subsection{Proof  of  Theorem~\ref{PA3a}}
We note  that in  case  of  this  theorem  $\gamma_t=\gamma,$
$\beta_t=\beta,$  and  hence $c_t=c$  for  all  $t\in(0,1].$
Therefore, Theorem~\ref{PA30} and  the  contraction  principle, (see
Dembo and Zeitouni, 1998, Theorem~4.2.1) imply  the  large deviation
principle  for  $M_X$ in  the  space $\skrim(\skrin\times\skrix)$
with good rate function
$$\begin{aligned}
\inf_{\nu\in\skrid_{\skrim}}\Big\{\tilde{J}(\omega,\nu):\omega=\nu_1\Big\}&=\inf_{\nu\in\skrid_{\skrim}}\Big\{H(\omega_{2,1}\,\|\,\mu)
+\sum_{a\in\skrix}\omega_{2}(a)\int_{0}^{1}H\Big(\omega(\cdot|a)\,\|\,\,\frac{c}{{f}}\otimes\nu_t(\cdot|a)\Big)dt:\omega=\nu_1\Big\}\\
&\ge\inf_{\nu\in\skrid_{\skrim}}\Big\{ H(\omega_{2,1}\,\|\,\mu)
+\sum_{a\in\skrix}\omega_{2}(a)H\Big(\omega(\cdot|a)\,\|\,\,\frac{c}{{f}}\otimes\int_{0}^{1}\nu_t(\cdot|a)dt\Big):\omega=\nu_1\Big\}\\
&=H(\omega_{2,1}\,\|\,\mu)+\sum_{a\in\skrix}\omega_{2}(a)H\Big(\omega(\cdot|a)\,\|\,\,\frac{c}{{f}}\otimes\hat\omega(\cdot|a)\Big)=J(\omega)
\end{aligned}$$

where    in  the  third  step,  we  have used  the  inequality
$$\nu_t(k|a)\le
\int_{0}^{1}\nu_t(k|a)dt=\int_{0}^{1}\nu_t^{\nu}(k|a)dt=\1-\sum_{i=0}^{k}\nu_1(i|a)=\1-\sum_{i=0}^{k}\omega(i|a)=\hat\omega(k|a)$$
for all $(k,a)\in\skrin\times\skrix$ and for all $t\in[0,1].$ This
 ends  the  proof  this  Theorem.
\subsection{Proof  of  Theorem~\ref{PA3}}

In the case of an preferential attachment graph, the function
$c=\gamma(a)+\beta(a)$ degenerates to a constant~$c=\gamma+\beta$
and $M_X=\skril\in\skrim(\skrin)$. Theorem~\ref{PA3a} and the
contraction principle imply a large deviation principle for $\skril$
with good rate function
$$J(\ell)=H\Big(\ell\,\|\,\sfrac{(\gamma+\beta)}{f}\otimes\hat{\ell}\Big)=I(\ell),$$
where
$\sfrac{(\gamma+\beta)}{f}\otimes\hat{\ell}(k)=\frac{(\gamma+\beta)}{f(k)}\hat{\ell}(k)$
and $\hat{\ell}(k)=\1-\sum_{j=0}^{k}\ell(k).$

\textbf{References}

\hangafter=1

\setlength{\hangindent}{2em}

 {\sc  Barab´asi, A.} and
{\sc  Albert, R.}(1999).
\newblock {Emergence of Scaling in Random Networks.}
\newblock{Science 286,509-512.}
\smallskip

\hangafter=1

\setlength{\hangindent}{2em}

{\sc  Biggins, J.D.}(2004).
\newblock{Large deviations for mixtures.}
\newblock{\emph{ El. Comm. Probab.9 60 71 (2004).}}
\smallskip

\hangafter=1

\setlength{\hangindent}{2em}

 {\sc Collevecchio,A.}, {\sc Cotar, C.} and
{\sc LiCalzi, M.} (2013).
\newblock{ On a preferential attachment
and generalized Polya's urn model. Ann. Appl. Probab. 23,
1219–1253.}

\hangafter=1

\setlength{\hangindent}{2em}

{\sc Choi, J.} and {\sc Sethuraman, S.}(2011)
\newblock{Large deviations of  the degree structures in
P.A schemes. }
\newblock \emph{The  annals  of  applied  probability, 23, 722-763.}
\smallskip

\hangafter=1 \setlength{\hangindent}{2em} {\sc Dereich, S.} and {\sc
Morters, P.}(2009).
\newblock{Random networks with sublinear preferential  attachement: Degree evolutions.}
\newblock{\emph{Electronic Journal of Probability, 14, pp.
1222-1267.}}
\smallskip

\hangafter=1 \setlength{\hangindent}{2em}

{\sc Doku-Amponsah, K.}(2006)
\newblock{Large deviations and basic information theory for hierarchical and networked data structures.}
\newblock PhD Thesis, Bath (2006).
\smallskip

\hangafter=1 \setlength{\hangindent}{2em} {\sc Bryc, W.}, {\sc
~Minda, D.} and {\sc ~Sethuraman, S.}(2009).
\newblock{Large deviations for the leaves in some random trees .}
\newblock { \emph{Adv. in  Appl. Probab. Volume 41, Number 3 (2009),
845-873.}http://dx.doi.org/10.1239/aap/1253281066}
\smallskip

\hangafter=1 \setlength{\hangindent}{2em}

 {\sc Doku-Amponsah, K.}  and
{\sc~M\"orters, P.}(2010).
\newblock{Large deviation principle for  empirical measures of
coloured random graphs.}
\newblock {\emph{The  annals  of  Applied Probability}}, 20,
1989-2021(2010).http://dx.doi.org/10.1214/09-AAP647

\hangafter=1

\setlength{\hangindent}{2em}

 {\sc Dembo, A.} and {\sc
O.~Zeitouni, O.}(1998).
\newblock Large deviations techniques and applications.
\newblock Springer, New York, (1998).

\hangafter=1

\setlength{\hangindent}{2em}

{\sc  Krapivsky, P. L.},{ Redner, S.} and { Leyvraz, F.}  (2000).
Connectivity of growing random networks. Physical review letters 85,
4629.

\hangafter=1
\setlength{\hangindent}{2em}

{\sc M. Newman,M}, {\sc  Barab´asi, A.-L.} and {\sc Watts, D. J.}
(2006).
\newblock The structure and dynamics of networks. Princeton
University Press.

\hangafter=1

\setlength{\hangindent}{2em}

{\sc Newman, M. E. J.} (2003). The structure and function of complex
networks. SIAM review.

\hangafter=1 \setlength{\hangindent}{2em}

 {\sc Lawrence, S.} and
{~Giles, C.L.}(1998)(1999).
\newblock {Science 280, 98 (1998); Nature 400, 107 (1999).}
\smallskip

\hangafter=1 \setlength{\hangindent}{2em}

 {\sc Rudas, B.},{\sc Toth,
B. } and {\sc ~Valko, B.}(2008).
\newblock{Random Trees and General Branching Processes.}
\newblock {http://arxiv.org/abs/math/0503728}
\smallskip

\bigskip

\end{document}